\documentclass{article}
\usepackage[margin=20mm]{geometry}

\usepackage{amsmath, amssymb}
\usepackage{graphicx, wrapfig} 
\usepackage{amsthm}
\usepackage{url}
\usepackage{color}
\usepackage{mathrsfs}
\usepackage{bm}

\theoremstyle{plain}
\newtheorem{theorem}{Theorem}

\theoremstyle{definition}
\newtheorem{definition}{Definition}
\newtheorem{example}{Example}

\newtheorem{proposition}{Proposition}
\newtheorem{lemma}{Lemma}
\newtheorem{corollary}{Corollary}

\newtheorem{remark}{Remark}

\usepackage{color}

\title{Cabled braids and their crossing matrices}
\author{Ayaka Shimizu\thanks{Center for Soft Matter Physics, Ochanomizu University, 2-1-1, Otsuka, Bunkyo-ku, Tokyo, 112-8610, Japan. Email: shimizu.ayaka@ocha.ac.jp, shimizu1984@gmail.com} 
and Yoshiro Yaguchi\thanks{Maebashi Institute of Technology, 460-1, Kamisadori, Maebashi, Gunma, 371-0816, Japan. Email: y.yaguchi@maebashi-it.ac.jp}}
\date{\today}

\begin{document}

\maketitle

\begin{abstract}
We study cabling operations on braids, and characterize the matrices that can be realized as crossing matrices of reducible pure braids. 
We also use cabling to construct enhanced conjugacy invariants of braids. 
\end{abstract}

\section{Introduction}
\label{section-intro}

The {\it crossing matrix} $C(b)$ of a braid $b$, introduced in \cite{Bu}, is a matrix-valued invariant (see Section \ref{section-CM-cabled} for definition). 
In \cite{Bu}, crossing matrices of braids and those of pure braids are completely characterized\footnote{At the moment, the characterization of crossing matrices of positive pure braids has not been completed. See \cite{YAY, AY-5} for recent progress.}. 
In this paper, we study crossing matrices of cabled braids and give a complete characterization for crossing matrices of reducible pure braids. 

In this section, $n$ is an integer with $n\geq 3$. 
We prove the following theorem in Section \ref{section-CM-cabled}.

\begin{theorem}
Let $N=\{1, 2, \dots, n\}$. 
An $n\times n$ matrix $A=[a_{ij}]$ is the crossing matrix of a reducible pure $n$-braid if and only if $A$ is a zero-diagonal symmetric matrix and 
there exists a partition $N_1\amalg N_2\amalg \dots \amalg N_m$ of $N$ with $1<m<n$ such that for any distinct $k, l\in \{1, 2, \dots ,m\}$ and any $(i,j),\ (i',j')\in N_k\times N_l$, $a_{ij}=a_{i'j'}$.
\label{thm-CM-red}
\end{theorem}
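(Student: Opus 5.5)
Recall from \cite{Bu} that for a pure $n$-braid $b$ the crossing matrix $C(b)=[c_{ij}]$ has zero diagonal, and $c_{ij}=c_{ji}$ records the signed count of crossings between strands $i$ and $j$ (twice the linking number of the corresponding closed components). Its characterization states that every zero-diagonal symmetric integer matrix, with the appropriate parity normalization, is realized by some pure braid. Concretely, $\prod_{i<j}A_{ij}^{a_{ij}/2}$, with $A_{ij}$ the standard pure braid generators, realizes any admissible matrix, because $C$ is additive on pure braids and $C(A_{ij})$ has entries $\pm$(unit) in positions $(i,j),(j,i)$ only. I will use this realization result, together with the description of reducible braids as those preserving an essential round curve system in the punctured disk. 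For a pure braid, each curve is fixed individually. Equivalently, up to conjugation, a reducible pure braid is a cabling: an $m$-strand pure braid $\beta$ with $1<m<n$ whose $k$-th strand is replaced by a tube carrying an $|N_k|$-strand pure braid $\beta_k$, with $|N_k|\ge 2$ for at least one $k$.

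\textbf{Necessity.} Take a reducible pure braid $b$. First I handle the conjugation issue. For pure braids, conjugation acts on strand labels by the permutation of the conjugator, so $C(gbg^{-1})$ is a simultaneous row/column permutation of $C(b)$. In fact, since $C$ is computed from linking numbers of the closure, which are conjugation invariant, the matrix is just relabelled. The partition condition is preserved under relabelling, so I may assume $b$ is in cabled form with the strands of tube $k$ forming a block $N_k$ (not necessarily consecutive after relabelling, but that is harmless). Now take strands $i\in N_k$ and $j\in N_l$ with $k\ne l$. Every crossing between them occurs inside a crossing of tube $k$ with tube $l$ in $\beta$. A single crossing of tubes $k,l$ of sign $\varepsilon$ contributes exactly one crossing of sign $\varepsilon$ between each pair $(i,j)\in N_k\times N_l$. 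Inner braids $\beta_k$ contribute no crossings between different tubes. Hence $a_{ij}$ equals the $(k,l)$ entry of $C(\beta)$, independent of the choice of $(i,j)$. Zero-diagonal and symmetric follow from the general result. Finally, $1<m<n$ holds because the reduction curve system is essential: there are at least two tubes and some tube contains at least two strands. A curve around a single puncture or around all punctures is not essential.

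\textbf{Sufficiency.} Given such an $A$ and partition, I will choose a permutation reordering $N$ so that each $N_k$ is consecutive, construct the braid there, and then conjugate back by a permutation braid. This step uses the relabelling fact above, but I must take care that conjugating by a non-pure braid keeps the braid pure and reducible. That holds because conjugation preserves purity and carries reduction curves to reduction curves. Define the $m\times m$ matrix $B$ with $b_{kl}$ equal to the common value on $N_k\times N_l$ and $b_{kk}=0$. Since $A$ is zero-diagonal symmetric and admissible, so is $B$, and I realize $B$ by an $m$-strand pure braid $\beta$. For each $k$, let $A_k$ be the principal submatrix of $A$ on $N_k$, and realize it by an $|N_k|$-strand pure braid $\beta_k$. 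The cabled braid $\beta(\beta_1,\dots,\beta_m)$ then has crossing matrix $A$: the within-block entries come from the $\beta_k$, and the cross-block entries come from the computation above.

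\textbf{Main obstacle.} The cabled braid must actually be reducible, i.e.\ the tube boundaries must be essential, non-peripheral curves. Because $m<n$, some $N_k$ has at least two points, and because $m>1$, that block is not all of $N$. So the round curve enclosing $N_k$ is essential, and it is preserved since $\beta$ is pure and moves tube $k$ as a rigid cable. One subtlety: if $\beta$ is trivial, the braid may still be reducible, and that is fine, since reducibility only requires some invariant essential curve system. A second subtlety concerns any parity constraint on the entries of $A$ coming from the convention of \cite{Bu}. Such a constraint transfers to $B$ and to each $A_k$ automatically, since their entries are entries of $A$.
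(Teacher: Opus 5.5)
Your proposal is correct and is essentially the paper's proof. For necessity you pass to a standard cabled form via Proposition~\ref{prop-GM} and use that conjugate pure braids have permutation-equivalent crossing matrices; for sufficiency you reorder so the $N_k$ are consecutive, realize the quotient matrix and the diagonal blocks by pure braids, cable, and conjugate back by a braid inducing the reordering (the paper packages the middle step as Lemma~\ref{lem-CM-ch}).

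One correction, about the normalization. In the paper $c_{ij}$ counts only crossings where strand $i$ passes over strand $j$, so for a pure braid it is the linking number itself, not twice it. Your generators $A_{ij}$ therefore have $1$'s at $(i,j),(j,i)$, and $A$ is realized by $\prod_{i<j}A_{ij}^{a_{ij}}$ rather than $\prod_{i<j}A_{ij}^{a_{ij}/2}$. Lemma~\ref{lem-CM-pure} carries no parity condition. Your argument needs this, since the theorem imposes none and would fail for odd entries under your reading, so drop the ``admissible'' qualifier and the parity hedge.
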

\medskip


\noindent For a subset $S$ of $N$, let $\overline{S}:=N\setminus S$. 
We have the following corollary. 

\medskip 
\begin{corollary}
If an $n \times n$ matrix $A=[a_{ij}]$ is the crossing matrix of a reducible pure $n$-braid, then there exists a subset $S$ of $N= \{ 1, 2, \dots , n \}$ with $1<|S|<n$ such that for any $k\in \overline{S}$, $a_{kl}=a_{kl'}$ for some distinct $l,l'\in S$.
\label{cor-red2}
\end{corollary}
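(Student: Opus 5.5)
Corollary~\ref{cor-red2} follows from Theorem~\ref{thm-CM-red} by choosing $S$ to be one of the blocks of the partition. Let $A$ be the crossing matrix of a reducible pure $n$-braid. The theorem gives a partition $N_1\amalg N_2\amalg\dots\amalg N_m$ of $N$ with $1<m<n$ such that, for distinct $k,l$, the entry $a_{ij}$ is constant on $N_k\times N_l$.

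\emph{Choosing $S$.} Since $m<n$ and the $m$ blocks cover the $n$ elements of $N$, the pigeonhole principle gives some block $N_p$ with $|N_p|\geq 2$. Since $m>1$, there is at least one other nonempty block, so $|N_p|\leq n-1$. Set $S:=N_p$; then $1<|S|<n$, as required.

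\emph{Checking the property.} Take any $k\in\overline{S}$. Then $k$ lies in some block $N_q$ with $q\neq p$. Pick distinct $l,l'\in S$, which exist because $|S|\ge 2$. Both pairs $(k,l)$ and $(k,l')$ belong to $N_q\times N_p$, and $q\neq p$, so the theorem gives $a_{kl}=a_{kl'}$. This is exactly the required conclusion.

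There is no real obstacle. The only points needing care are the two inequalities: $|S|\ge 2$ comes from $m<n$ via pigeonhole, and $|S|<n$ comes from $m>1$. The symmetry and zero-diagonal conditions of the theorem are not needed here.
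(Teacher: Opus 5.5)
Your proof is correct and is essentially the paper's argument: choose a block of size at least two, which exists because $m<n$, and let $S$ be its index set. The paper phrases this through the permutation-equivalent cable-block matrix given by the lemmas on permutation equivalence and standard forms, rather than citing the partition form of Theorem~\ref{thm-CM-red} directly, and it leaves implicit your observation that $m>1$ forces $|S|<n$.
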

\medskip

\noindent We provide a useful sufficient condition for pure braids to be pseudo-Anosov (see Example \ref{ex-anosov}). 

\medskip 
\begin{corollary} 
Let $A=[a_{ij}]$ be an $n \times n$ zero-diagonal symmetric matrix. 
If $A$ satisfies that for any subset $S$ of $N= \{ 1, 2, \dots , n \}$ with $1<|S|<n$, $a_{kl}\ne a_{kl'}$ for any $k\in \overline{S}$ and any distinct $l, l'\in S$, then $A$ is the crossing matrix of a pseudo-Anosov pure $n$-braid. 
In particular, if $a_{kl}\ne a_{kl'}$ for any $k\in N$ and any distinct $l, l'\in N$, then $A$ is the crossing matrix of a pseudo-Anosov pure $n$-braid.
\label{cor-anosov}
\end{corollary}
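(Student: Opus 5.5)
We argue from Theorem~\ref{thm-CM-red}, Corollary~\ref{cor-red2}, the Nielsen--Thurston classification, and Burillo's characterization of crossing matrices of pure braids \cite{Bu}.

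\textbf{Realizability.} By \cite{Bu}, every zero-diagonal symmetric integer matrix is the crossing matrix of some pure $n$-braid. Fix such a matrix $A$ satisfying the hypothesis, and let $b$ be a pure braid with $C(b)=A$.

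\textbf{$b$ is not reducible.} Suppose $b$ were reducible. By Corollary~\ref{cor-red2}, there is a subset $S\subset N$ with $1<|S|<n$, an element $k\in\overline{S}$, and distinct $l,l'\in S$ such that $a_{kl}=a_{kl'}$. This contradicts the hypothesis. Note that $\overline{S}$ is nonempty because $|S|<n$, so such a $k$ genuinely exists. Concretely, $S$ can be taken to be a block $N_k$ of the partition in Theorem~\ref{thm-CM-red} with $|N_k|\ge 2$. Such a block exists because $m<n$, and it is proper because $m>1$.

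\textbf{$b$ is not periodic.} By Nielsen--Thurston, a braid that is neither reducible nor periodic is pseudo-Anosov, so it remains to exclude periodicity. A periodic pure $n$-braid is a power of the full twist $\Delta^2$, since the only periodic elements of the pure braid group are central. Its crossing matrix therefore has all off-diagonal entries equal. Since $n\ge 3$, we can take $S=\{1,2\}$ and $k=3$, which gives $a_{31}=a_{32}$ and again contradicts the hypothesis.

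One point of care: if "reducible" is taken to exclude periodic braids, the full-twist case is handled by the argument above. In any case, a power of $\Delta^2$ with $n\ge3$ also preserves a round curve around strands $1,2$, so it is reducible in the sense of Theorem~\ref{thm-CM-red}. Hence $b$ is pseudo-Anosov.

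\textbf{The "in particular" clause.} If $a_{kl}\ne a_{kl'}$ for every $k\in N$ and all distinct $l,l'\in N$, then in particular this holds for every $k\in\overline{S}$ and distinct $l,l'\in S$, for every admissible $S$. So the first statement applies.

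The main obstacle is the periodic case, which depends on the convention for "reducible" and on the fact that periodic pure braids are central. I would cite the standard fact that the center of $P_n$ is generated by $\Delta^2$, which suffices.
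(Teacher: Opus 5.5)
Your proof is correct and is essentially the paper's. You realize $A$ by a pure braid via Lemma~\ref{lem-CM-pure}, rule out reducibility with Corollary~\ref{cor-red2}, and handle the periodic case by noting that a periodic pure braid is a power of $\Delta^2$, so $C(b)=pD$ as in Lemma~\ref{lem-periodic}, and is reducible for $n\ge 3$.

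One small correction to your closing remark: $Z(\mathcal{P}_n)=\langle\Delta^2\rangle$ alone does not show that periodic pure braids are central. The clean justification is that a periodic braid is conjugate to $\delta^k$ or $\gamma^k$; purity forces $n\mid k$ or $(n-1)\mid k$, and then $b$ equals a central power of $\Delta^2$.
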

\medskip

Cabling has been used to enhance polynomial invariants of knots and links.
(See, for example, \cite{LL, MT, JM}. See also \cite{T}.)
In this paper, we also investigate how cabling can be used to enhance conjugacy invariants of braids. 
Let $g$ be a conjugacy invariant of braids. 
In Appendix \ref{section-conjugacy}, we define a multiset $F_{\mathbf{\beta}}(g;b)$ that is defined for an $n$-braid $b$, an $n$-tuple of braids, and a conjugacy invariant $g$. 
We prove the following in Appendix \ref{section-conjugacy}.

\medskip 
\begin{proposition}
Let $b$, $b'$ be pure $n$-braids. 
If $b$ and $b'$ are conjugate, then $F_{\mathbf{\beta}}(g;b)=F_{\mathbf{\beta}}(g;b')$ for any $n$-tuple of braids $\mathbf{\beta}$ and conjugacy invariant $g$. 
\label{prop-p-Fb}
\end{proposition}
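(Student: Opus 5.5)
We have not yet seen the definition of $F_{\mathbf{\beta}}(g;b)$, so the plan anticipates its likely form. Presumably, for a pure $n$-braid $b$ and an $n$-tuple $\mathbf{\beta}=(\beta_1,\dots,\beta_n)$, one forms the multiset of values $g(b_\sigma(\mathbf{\beta}))$ as $\sigma$ ranges over the symmetric group $S_n$. Here $b_\sigma(\mathbf{\beta})$ is the cabled braid obtained from $b$ by replacing its $i$-th strand with a cable carrying the braid $\beta_{\sigma(i)}$; equivalently, one cables by the permuted tuple and inserts the $\beta$'s at the bottom. Taking all permutations is what makes the multiset insensitive to the relabelling of strands caused by conjugation.

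The proof would go in three steps.

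First, suppose $b'=cbc^{-1}$ with $c\in B_n$, and let $\pi\in S_n$ be the permutation of $c$. Since $b$ and $b'$ are pure, strand $i$ of $b$ corresponds to strand $\pi(i)$ of $b'$ (up to convention on $\pi$ versus $\pi^{-1}$).

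Second, cabling is compatible with composition of braids, provided the cable widths are tracked along the strands. Write $C(x;\mathbf{w})$ for the cabling of $x$ with widths $\mathbf{w}$ assigned at the top. Then $C(xy;\mathbf{w})=C(x;\mathbf{w})\,C(y;\pi_x(\mathbf{w}))$, where the widths are permuted by the permutation $\pi_x$ of $x$. The inserted braids $\beta_{\sigma(i)}$ are attached to pure strands of $b$, so they can be slid along those strands. This gives
\[
C(b';\mathbf{w})\cdot(\text{inserted }\beta\text{'s}) \;=\; C(c)\,\bigl(C(b;\pi^{*}\mathbf{w})\cdot(\text{permuted inserted }\beta\text{'s})\bigr)\,C(c)^{-1},
\]
where $C(c)$ is the cabling of $c$ with the appropriate widths and $\pi^{*}\mathbf{w}$ denotes the widths permuted by $\pi$. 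Concretely, the cabled $b'$ with tuple arrangement $\sigma$ is conjugate, via the cabled $c$, to the cabled $b$ with arrangement $\sigma\circ\pi$ (or $\pi^{-1}$). The $\beta$ blocks pass through $C(c)$ as parallel blocks, since a cable of $c$ carries each sub-braid along its strand unchanged. This naturality is the key lemma.

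Third, since $g$ is a conjugacy invariant, $g$ takes the same value on the $\sigma$-term for $b'$ and the $\sigma\circ\pi$-term for $b$. The map $\sigma\mapsto\sigma\circ\pi$ is a bijection of $S_n$, so the two multisets coincide.

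The main obstacle is the second step: verifying the cabling identity carefully. Cabling $c$ requires width data that agree at both ends of $c$, and the inserted $\beta$'s must commute past $C(c)$. I would prove this by induction on the word length of $c$. For a single generator $\sigma_j^{\pm1}$, cabling replaces the crossing by a band crossing of two cables, and a braid living inside one cable slides through the band crossing to the other side, possibly with the cable index changed. This is a local isotopy. If the paper's definition instead fixes a strand-labelling convention (for example, taking the multiset over all $\sigma$, or over orbits), the same bijection argument applies with $\pi$ acting by relabelling.
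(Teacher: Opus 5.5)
Your proposal is correct and is essentially the paper's argument. Your naturality lemma is what the paper obtains from Lemma~\ref{lem-product}, Lemma~\ref{lem-inverse} and Proposition~\ref{prop-conj-perm}: if $a$ conjugates $b$ to $b'$, then the cabled conjugator $f(b_1,\dots,b_n;a)$ conjugates $f(b_1,\dots,b_n;b)$ to $f(b_{\rho(1)},\dots,b_{\rho(n)};b')$ with $\rho=\varphi^{-1}$. The paper then concludes by the same reindexing bijection on permutations as your third step; the only difference is that it checks the cabling identities geometrically, tube by tube, rather than by induction on the word length of the conjugator.
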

\medskip

\noindent As shown in Examples \ref{ex-w1}, \ref{ex-w2}, and \ref{ex-np-W}, cabling can make conjugacy invariants more effective in certain cases. \\

The rest of the paper is organized as follows. 
In Section \ref{section-braid}, we recall braids and define a cabling operation on braids. 
In Section \ref{section-CM-cabled}, we review the crossing matrix of braids and prove Theorem \ref{thm-CM-red}, Corollaries \ref{cor-red2} and \ref{cor-anosov}. 
In Section \ref{section-periodic}, we investigate the crossing matrix of periodic braids. 
In Appendix \ref{section-conjugacy}, we explore conjugacy invariants and prove Proposition \ref{prop-p-Fb}.

\section{Braids}
\label{section-braid}

\subsection{Braids and braid diagrams}

An {\it $n$-braid} $b$ consists of $n$ strands in $\mathbb{R}^3$, with endpoints on two horizontal bars, such that each strand runs monotonically from the upper bar to the lower bar. 
Let $\mathcal{B}_n$ denote the set of $n$-braids. 
Let $\mathcal{B}$ denote the set of all braids. 
The strand of an $n$-braid whose upper endpoint is in the $i^{th}$position from the left is called the {\it $i^{th}$ strand} and denoted by $s_i$. \\

A {\it braid diagram} $B$ of a braid $b$ is a regular projection of $b$ on $\mathbb{R}^2$ as depicted in Figure \ref{fig-BD}. 
\begin{figure}[ht]
\centering
\includegraphics[width=1.5cm]{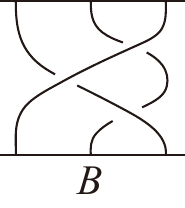}
\caption{A braid diagram $B$.}
\label{fig-BD}
\end{figure}
Each braid diagram can be represented by a word in the generators $\sigma_i$ and $\sigma_i^{-1}$, where $\sigma_i$ and $\sigma_i^{-1}$ are shown in Figure \ref{fig-s}. 
For example, the braid diagram $B$ shown in Figure \ref{fig-BD} is expressed as $\sigma_2 \sigma_1 \sigma_2^{-1}$. 
It is well known that two braid diagrams $B$ and $B'$ represent the same braid if and only if their word representations are related by a finite sequence of the following three types of transformations for $\varepsilon \in \{ +1, -1 \}$, $i \in \{ 1, 2, \dots, n-1 \}$: 
$\sigma_i^{\varepsilon} \sigma_i^{-\varepsilon}=1$, $\sigma_i \sigma_j \sigma_i = \sigma_j \sigma_i \sigma_j$ if $|i-j|=1$, and $\sigma_i \sigma_j = \sigma_j \sigma_i$ if $|j-i|>1$ (\cite{Artin-2}). 
\begin{figure}[ht]
\centering
\includegraphics[width=6cm]{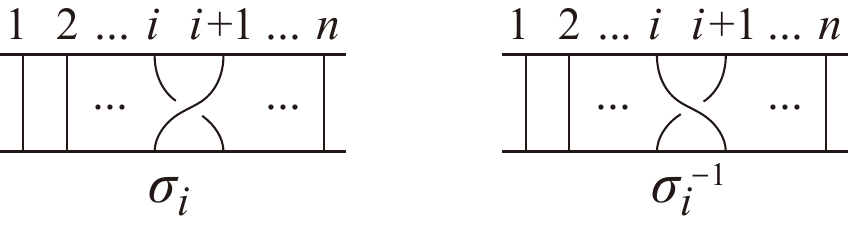}
\caption{$\sigma_i$ and $\sigma_i^{-1}$.}
\label{fig-s}
\end{figure}

Each braid $b \in \mathcal{B}_n$ induces a permutation on $\{ 1, 2, \dots , n \}$, defined as follows. 
Suppose that the $i^{th}$ strand has the lower endpoint in the $\pi (i)^{th}$ position from the left on the lower bar. 
The resulting permutation $\pi (1, 2, \dots , n)=(\pi(1), \pi(2), \dots , \pi(n))$ is called the {\it braid permutation} of $b$. 
For example, the braid illustrated in Figure \ref{fig-BD} has the braid permutation $\pi (1,2,3)=(3,2,1)$, and the order of $\pi$ is 2.

\subsection{Cabled braids}

Let $b$ be an $n$-braid with $n$ strands $s_1, s_2, \dots , s_n$. 
Let $\mathbf{\beta}=(b_1, b_2, \dots , b_n)$ be a tuple of braids $b_1, b_2, \dots , b_n \in \mathcal{B}$. 
A {\it cabled braid} $f(\mathbf{\beta}; b)=f(b_1, b_2, \dots, b_n;b)$ is a braid that is obtained from $b$ by replacing each strand $s_i$ with a braid $b_i$ in a sufficiently small tubular neighborhood of $s_i$ in ${\mathbb R}^3$ as shown in Figure \ref{fig-cabling}. 
\begin{figure}[ht]
\centering
\includegraphics[width=7cm]{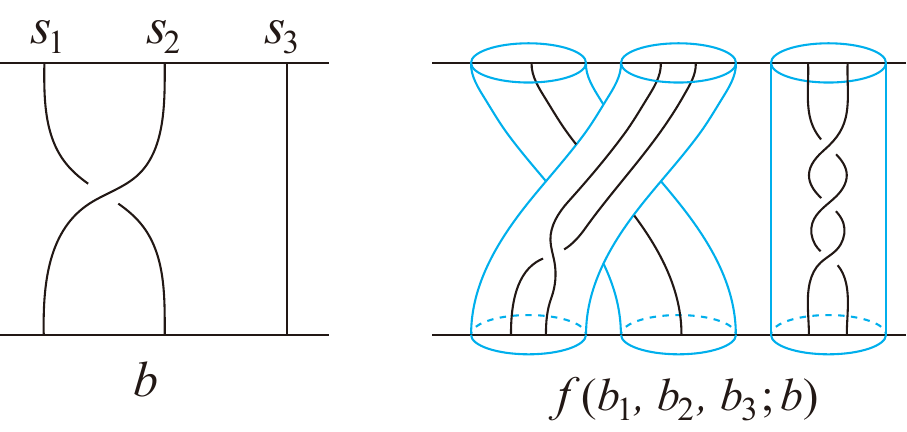}
\caption{A cabled braid $f(b_1, b_2, b_3;b)$ with $b_1=id_1 \in \mathcal{B}_1, b_2= \sigma_1^{-1} \in \mathcal{B}_2, b_3=\sigma_1^2 \in \mathcal{B}_2$. }
\label{fig-cabling}
\end{figure}
We call such a tubular neighborhood a {\it tube of $s_i$}. 
We denote the trivial $n$-braid by $id_n$. 
A braid $b$ is said to be {\it cabled} if it can be expressed as $b=f(b_1, b_2, \dots , b_m;a)$ with $m \geq 2$ and at least one of $b_1, b_2, \dots , b_m$ is not $id_1$. \\

Next, we define an integer-cabling. 
Let $k_1, k_2, \dots , k_n \in \mathbb{Z}_+=\{k\in Z\ |\ k>0\}$. 
We define a {\it $(k_1, k_2, \dots , k_n)$-cabling} of $b$ by $f(k_1, k_2, \dots , k_n;b)=f(id_{k_1}, id_{k_2}, \dots , id_{k_n} ;b)$. 
Note that $f(k_1, k_2, \dots , k_n ;b) \in \mathcal{B}_{k_1+k_2+\dots +k_n}$.

\subsection{Reducible braids}

According to the Nielsen–Thurston classification, every braid is periodic, reducible, or pseudo-Anosov (see, for example, \cite{GM}). In particular, every periodic pure $n$-braid is reducible for $n\geq 3$. 
Two braids $b, b' \in \mathcal{B}_n$ are {\it conjugate} and denoted by $b \stackrel{{ \mathrm{conj}}}{\backsim} b'$ if there exists a braid $a \in \mathcal{B}_n$ such that $b'=a^{-1}ba$. 
When $b'=a^{-1}ba$, we say that $a$ conjugates $b$ to $b'$. 
Let $\mathcal{P}_n$ be the set of all pure $n$-braids and let $\mathcal{P}$ be the set of all pure braids. 
In \cite{GM}, a necessary and sufficient condition for a pure braid to be reducible is shown. 

\medskip 
\begin{proposition}[\cite{GM}]
A pure braid $b \in \mathcal{P}_n$ is reducible if and only if $b$ is conjugate to a cabled braid $f(b_1, b_2, \dots , b_m;a)$, where $1<m<n$, $a \in \mathcal{P}_m$, $b_1, b_2, \dots , b_m \in \mathcal{P}$. 
\label{prop-GM}
\end{proposition}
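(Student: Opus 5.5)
Since Proposition \ref{prop-GM} is quoted from \cite{GM}, I would only sketch it through the mapping class picture. Identify $\mathcal{B}_n$ with the mapping class group of the $n$-punctured disk $D_n$, fixing the boundary. A braid is \emph{reducible} when some nonempty family $\mathcal{C}$ of pairwise disjoint, non-isotopic, essential simple closed curves is invariant up to isotopy. Essential means each curve encloses at least $2$ and at most $n-1$ punctures. Reducibility is a conjugacy invariant: if $b$ preserves $\mathcal{C}$, then $a^{-1}ba$ preserves $a^{-1}(\mathcal{C})$. So both directions may be checked up to conjugation.

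For the ``if'' direction, consider $f(b_1,\dots,b_m;a)$ with $1<m<n$. Since $\sum_i(\text{number of strands of } b_i)=n>m$, some $b_i$ has at least $2$ strands. Since $m\ge 2$, that $b_i$ has at most $n-1$ strands. The boundary of the tube of the $i$-th strand of $a$ is then a round curve enclosing between $2$ and $n-1$ punctures, so it is essential. Because $a$ is pure, every tube returns to its own position, and the cabled braid preserves this curve. Hence the braid, and every conjugate of it, is reducible.

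For the ``only if'' direction, let $b\in\mathcal{P}_n$ preserve a reduction system $\mathcal{C}$. The first step is to show that $b$ fixes each curve of $\mathcal{C}$ up to isotopy, not merely the family. For a curve $c\in\mathcal{C}$, let $P(c)$ be the set of punctures it encloses. Since $b$ is pure, $b(c)$ encloses the same set $P(b(c))=P(c)$. Two disjoint curves in $\mathcal{C}$ with the same puncture set cobound an annulus, so they are isotopic. Hence $b(c)$, which lies in $\mathcal{C}$, must be $c$ up to isotopy.

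Next take the outermost curves $c_1,\dots,c_k$ of $\mathcal{C}$. Together with the $n-\sum|P(c_j)|$ punctures enclosed by no curve, they give $m$ blocks. Here $m>1$ because each $c_j$ misses some puncture, and $m<n$ because $|P(c_1)|\ge 2$.

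Choose a homeomorphism $h$ of $D_n$ carrying each $c_j$ to a small round circle around a set of consecutive punctures. Conjugating $b$ by the corresponding braid, we may assume the outermost curves are round. Isotope the representative homeomorphism so that it preserves each round disk setwise. Read as a braid, it then consists of $m$ tubes moving as a braid $a$ on $m$ strands. This $a$ is pure, because each disk returns to itself. Inside each tube there is a braid $b_i$, also pure because $b$ is pure. Thus $b$ is conjugate to $f(b_1,\dots,b_m;a)$ with $1<m<n$.

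I expect the main obstacle to be this last step. One must pass from ``invariant up to isotopy'' to an honest level-preserving isotopy of the braid in $D^2\times[0,1]$ that keeps the tubes disjoint and round. I would handle it with the standard fact that isotopic disjoint curve systems are ambient isotopic rel punctures. One then adjusts the braid's motion so that each disk moves rigidly, as in \cite{GM}.
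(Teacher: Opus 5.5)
The paper gives no proof of this proposition. It is quoted from \cite{GM} and used as a black box, so there is no internal route to compare with. Your sketch is the standard argument behind that citation, and it is correct.

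The conjugation invariance is fine, as is the tube-boundary curve in the ``if'' direction. So is the observation that a pure braid fixes each reduction curve individually, since disjoint curves with equal puncture sets cobound a puncture-free annulus. The count $1<m<n$ also checks out. In fact, because each curve is fixed, a single reduction curve $c$ already suffices, with $m=n-|P(c)|+1$. The outermost-curve bookkeeping can then be dropped.

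The step you flag is the only real content. It closes cleanly once you make ``each disk moves rigidly'' precise.
\begin{itemize}
\item First isotope the representative $\phi$ so that it fixes each round curve $c_j$ pointwise. This is possible because $\phi|_{c_j}$ is an orientation-preserving circle homeomorphism and a puncture-free collar of $c_j$ is available.
\item Then write $\phi=\phi_{\mathrm{out}}\phi_{\mathrm{in}}$ with commuting pieces. Here $\phi_{\mathrm{in}}$ is supported in the round disks and gives the pure braids $b_j$. The map $\phi_{\mathrm{out}}$ is the identity on those disks.
\item Capping each $c_j$ by a once-punctured disk sends $\phi_{\mathrm{out}}$ to a pure braid $a\in\mathcal{P}_m$. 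The kernel of capping is generated by the Dehn twists about the $c_j$.
\item Hence $\phi_{\mathrm{out}}$ equals $f(id_{k_1},\dots,id_{k_m};a)$ up to powers of these twists, that is, up to full twists $\Delta_{k_j}^2$ of the strands inside the tubes. These full twists are pure braids, so they can be absorbed into the $b_j$ without destroying purity.
\end{itemize}

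This is exactly the framing ambiguity hidden in the word ``rigid.'' A disk may rotate as it travels, and the resulting tube twists must be pushed into the $b_j$ to match the paper's cabling convention.
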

\medskip 

\noindent We call such a cabled braid $f(b_1, b_2, \dots , b_m;a)$ a {\it standard form of a reducible pure braid}.

\section{Crossing matrix of cabled braids}
\label{section-CM-cabled}

\subsection{Crossing matrix of braids}

The {\it crossing matrix} $C(B)$ of an $n$-braid diagram $B$ is an $n \times n$ matrix $[m_{i j}]$ such that $m_{i i}=0$ for $i \in \{ 1, 2,\dots , n \}$ and $m_{i j}$ is the sum of the signs of the crossings where the $i^{th}$ strand is over the $j^{th}$ strand (\cite{Bu, Gu}). 
The matrix $C(b)$ does not depend on the choice of a diagram $B$ representing $b$. 
We therefore denote it by $C(b)$ and call it the {\it crossing matrix of $b$}. 
The following lemma is proved in \cite{Bu}. 

\medskip 
\begin{lemma}[\cite{Bu}]
An $n \times n$ zero-diagonal matrix $M$ is the crossing matrix of a pure $n$-braid if and only if $M$ is symmetric. 
\label{lem-CM-pure}
\end{lemma}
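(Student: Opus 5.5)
The plan is to prove the two directions separately. For necessity I will reduce to two-strand sub-braids. For sufficiency I will build an explicit pure braid out of the standard generators of the pure braid group and use that crossing matrices add under concatenation of pure braids.

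\emph{Necessity.} Let $b$ be a pure $n$-braid with diagram $B$, and fix $i\neq j$. The entries $m_{ij}$ and $m_{ji}$ only involve crossings between $s_i$ and $s_j$. So they can be computed from the $2$-strand diagram $B_{ij}$ obtained by erasing all other strands. Isotopies of $b$ induce isotopies of the sub-braid formed by $s_i$ and $s_j$, so $B_{ij}$ represents a well-defined $2$-braid, and it is pure because $b$ is. Every pure $2$-braid equals $\sigma_1^{2k}$ for some $k\in\mathbb{Z}$. Since the crossing matrix is independent of the diagram, $C(B_{ij})=C(\sigma_1^{2k})$. In $\sigma_1^{\pm 1}$ the left strand passes over the right one (or the right over the left). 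After each crossing the two strands swap positions, so in $\sigma_1^{2k}$ each strand is over in exactly $|k|$ crossings, all of sign $\mathrm{sgn}(k)$. Hence $m_{ij}=m_{ji}=k$, and $C(b)$ is symmetric; it is zero-diagonal by definition.

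\emph{Sufficiency.} For $i<j$ set
\[
A_{ij}=\sigma_{j-1}\sigma_{j-2}\cdots\sigma_{i+1}\,\sigma_i^{2}\,\sigma_{i+1}^{-1}\cdots\sigma_{j-1}^{-1},
\]
which is a pure braid. I will compute $C(A_{ij})$ directly.
\begin{itemize}
\item In the conjugating part, strand $s_j$ moves left past each $s_k$ with $i<k<j$ via a crossing $\sigma_l$, where $s_k$ is the left strand and lies over $s_j$. This contributes $+1$ to $m_{kj}$.
\item On the way back, $s_j$ crosses the same $s_k$ via $\sigma_l^{-1}$, now with $s_k$ on the right and again over $s_j$. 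This contributes $-1$ to $m_{kj}$, so these contributions cancel.
\item The middle $\sigma_i^2$ gives one positive crossing with $s_i$ over $s_j$ and one with $s_j$ over $s_i$, as in the necessity computation.
\end{itemize}
Thus $C(A_{ij})=E_{ij}+E_{ji}$, where $E_{ij}$ is the matrix unit.

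Since pure braids do not permute strand labels, $C(b_1b_2)=C(b_1)+C(b_2)$ for pure $b_1,b_2$: crossings of the product are the disjoint union of the crossings of the factors, with the same strand labels. In particular $C(A_{ij}^{-1})=-C(A_{ij})$. Given a symmetric zero-diagonal $M=[m_{ij}]$, the braid $\prod_{i<j}A_{ij}^{m_{ij}}$ therefore has crossing matrix $\sum_{i<j}m_{ij}(E_{ij}+E_{ji})=M$.

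The main point needing care is the bookkeeping of the over/under conventions for $\sigma_l^{\pm1}$ in the conjugating part of $A_{ij}$, which is what makes the contributions of the intermediate strands cancel. The rest is routine.
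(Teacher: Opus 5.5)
The paper gives no proof of this lemma; it is quoted from \cite{Bu}, so there is no in-paper argument to compare yours with. Your proof is correct. It relies only on two facts that the paper also takes from \cite{Bu} and states explicitly:
\begin{itemize}
\item the diagram-independence of $C(b)$, which you use to replace the two-strand sub-diagram by $\sigma_1^{2k}$;
\item the product formula $C(b_1b_2)=C(b_1)+\pi(C(b_2))$. For pure factors this is plain additivity, and it also gives $C(A_{ij}^{-1})=-C(A_{ij})$.
\end{itemize}
The sufficiency direction via the standard generators $A_{ij}$ is a clean, fully explicit construction. It would also give explicit braids in the realization step of Lemma \ref{lem-CM-ch}, where the paper's example finds $b,b_i$ ad hoc.

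One bookkeeping point concerns the paper's crossing convention. It can be read off from $C(\delta)$ in the proof of Proposition \ref{prop-delta-gamma}: there $m_{i1}=1$ for $i\neq 1$, even though $s_1$ always enters from the left. So in $\sigma_l$ the strand coming from position $l+1$ passes over. In the conjugating part of $A_{ij}$ it is therefore $s_j$ that lies over each intermediate $s_k$, both going and returning. The $+1$ and $-1$ then cancel in $m_{jk}$, not in $m_{kj}$ as you wrote. This does not affect your conclusion. The cancellation only needs the same strand on top at both crossings of $s_j$ with $s_k$ and opposite signs at those crossings, which holds under either convention.

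If you want a necessity argument that avoids diagram-independence and $\mathcal{B}_2\cong\mathbb{Z}$, you can argue directly on any diagram. The crossings between $s_i$ and $s_j$ alternate between ones where $s_i$ moves from left to right and ones where it moves from right to left. A four-case sign check shows that $m_{ij}-m_{ji}$ equals, up to a convention-dependent global sign, the difference of these two counts. This difference is $0$ for a pure braid.
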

\medskip 

Let $M=[m_{ij}]$ be an $n \times n$ matrix and let $\rho$ be a permutation on $\{ 1, 2, \dots , n \}$. 
We define $\rho (M)=N=[n_{i j}]$ by setting $n_{i j}=m_{\rho^{-1}(i) \rho^{-1}(j)}$. 
It is shown in \cite{Bu} that $C(b_1 b_2)=C(b_1)+ \pi (C(b_2))$, where $\pi$ is the braid permutation of $b_1$. 
For the power of a braid, we have the following formula. 

\medskip 
\begin{proposition}[\cite{AS}]
Let $b$ be a braid with braid permutation $\pi$, and let $r$ be a non-negative integer. 
Then 
$$C(b^r)=\sum_{k=0}^{r-1} \pi^k (C(b)).$$
\label{prop-C-br}
\end{proposition}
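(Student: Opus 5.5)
We prove the formula by induction on $r$, using the product formula $C(b_1 b_2)=C(b_1)+\pi_1(C(b_2))$ from \cite{Bu}, where $\pi_1$ is the braid permutation of $b_1$.

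For $r=0$, $b^0=id_n$ has the zero crossing matrix, which agrees with the empty sum. For $r=1$ the formula reads $C(b)=\pi^0(C(b))$, which is immediate once we note that $\pi^0$ is the identity permutation, so $\pi^0(M)=M$.

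For the inductive step, we write $b^{r+1}=b^r\cdot b$ and apply the product formula with $b_1=b^r$ and $b_2=b$. Here we need the braid permutation of $b^r$. Since braid permutations compose under concatenation, the permutation of $b^r$ is $\pi^r$: following the $i$th strand through $r$ consecutive copies of $b$ sends it to position $\pi^r(i)$. This gives
$$C(b^{r+1})=C(b^r)+\pi^r(C(b))=\sum_{k=0}^{r-1}\pi^k(C(b))+\pi^r(C(b))=\sum_{k=0}^{r}\pi^k(C(b)),$$
which is the claim for $r+1$.

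Alternatively, we could split $b^{r+1}=b\cdot b^r$. The product formula then gives $C(b)+\pi(C(b^r))$, and to finish we would need the action to satisfy $\pi(\pi^k(M))=\pi^{k+1}(M)$ and to be linear. Both properties follow from the definition $n_{ij}=m_{\rho^{-1}(i)\rho^{-1}(j)}$: the map is clearly linear, and applying it twice reindexes by $\rho^{-1}$ and then $\sigma^{-1}$, so $(\sigma\rho)^{-1}=\rho^{-1}\sigma^{-1}$ shows it is a group action. The first splitting avoids needing this compatibility, so it is the one we use.

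The only point needing care is the convention for composing braid permutations, namely that the permutation of $b_1b_2$ is $\pi_2\circ\pi_1$. For powers of a single braid, however, all factors carry the same permutation $\pi$, so the order of composition does not matter and $b^r$ has permutation $\pi^r$ regardless of convention.
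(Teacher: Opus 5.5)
Your induction on $r$ via $b^{r+1}=b^r\cdot b$ is correct, and it is the argument the paper intends: the paper gives no proof of its own (the result is quoted from \cite{AS}), but it states the product formula $C(b_1b_2)=C(b_1)+\pi(C(b_2))$ immediately beforehand so that it can be iterated exactly this way, using that $b^r$ has permutation $\pi^r$. One caveat: the convention that really needs care is how $\pi$ acts on matrices, not the order of composing permutations, because tracking strands gives $C(b_1b_2)_{ij}=C(b_1)_{ij}+C(b_2)_{\pi(i)\pi(j)}$; so the product formula and the proposition hold, and the paper's examples such as $C(\delta^2)$ are computed, with $\pi(M)=[m_{\pi(i)\pi(j)}]$ rather than with the literal definition $n_{ij}=m_{\rho^{-1}(i)\rho^{-1}(j)}$ (under the latter both are off by an inverse, as $\delta=\sigma_1\sigma_2\in\mathcal{B}_3$ with $r=2$ shows), but since you use the product formula only as a black box, your argument goes through verbatim under the consistent reading.
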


\noindent For inverse braids, we have the following proposition. 

\medskip
\begin{proposition}
Let $b$ be an $n$-braid with braid permutation $\pi$. 
Let $C(b)=[m_{i j}]$, $C(b^{-1})=[ l_{i j}]$ be the crossing matrices of $b$, $b^{-1}$, respectively. 
Then $l_{i j}=- m_{\pi^{-1}(i) \ \pi^{-1}(j)}$.
\label{prop-CM-inverse}
\end{proposition}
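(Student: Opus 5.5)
We will obtain this from the product formula $C(b_1b_2)=C(b_1)+\pi(C(b_2))$ of \cite{Bu}, applied to $b_1=b$ and $b_2=b^{-1}$. Since $bb^{-1}=id_n$ and the trivial braid has no crossings, $C(id_n)$ is the zero matrix. Hence
$$0=C(b)+\pi(C(b^{-1})),$$
so $\pi(C(b^{-1}))=-C(b)$. The work then reduces to unwinding the definition of the action $\rho(M)$ correctly.

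By definition, $\rho(M)=N$ with $n_{ij}=m_{\rho^{-1}(i)\rho^{-1}(j)}$. Set $L=C(b^{-1})=[l_{ij}]$. The $(i,j)$ entry of $\pi(L)$ is $l_{\pi^{-1}(i)\pi^{-1}(j)}$, so for all $i,j$,
$$l_{\pi^{-1}(i)\pi^{-1}(j)}=-m_{ij}.$$
Substituting $i\mapsto \pi(i)$ and $j\mapsto\pi(j)$ gives
$$l_{ij}=-m_{\pi(i)\pi(j)}.$$

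This is not yet the stated form $l_{ij}=-m_{\pi^{-1}(i)\pi^{-1}(j)}$, so the main obstacle is index bookkeeping. I must check which convention for strand labels the product formula actually uses, since there is an off-by-inverse ambiguity.

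In $b_1b_2$, the $i$-th strand of $b_1$ ends at position $\pi(i)$ and continues as strand $\pi(i)$ of $b_2$. The crossings in the $b_2$ part between global strands $i$ and $j$ therefore contribute $C(b_2)_{\pi(i)\pi(j)}$. For $C(b_1)+\pi(C(b_2))$ to be correct, $\pi(M)$ would need entries $m_{\pi(i)\pi(j)}$, which suggests the intended convention is the reverse of how I read it. Under that reading, the identity becomes $l_{\pi(i)\pi(j)}=-m_{ij}$, i.e. $l_{ij}=-m_{\pi^{-1}(i)\pi^{-1}(j)}$, which is the stated result.

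Because of this ambiguity, I would give the direct geometric argument as the actual proof, and present the product-formula computation only as a consistency check. Draw $b^{-1}$ as the mirror image of a diagram of $b$ in a horizontal line, so it is read bottom to top. Every crossing then keeps the same strands in the over/under roles but changes sign. Strand $s$ of $b^{-1}$, which starts at top position $i$, is the strand of $b$ that ends at position $i$, namely $s_{\pi^{-1}(i)}$ of $b$. So the crossings where strand $i$ of $b^{-1}$ passes over strand $j$ are exactly the crossings where $s_{\pi^{-1}(i)}$ passes over $s_{\pi^{-1}(j)}$ in $b$, with opposite signs. This gives $l_{ij}=-m_{\pi^{-1}(i)\pi^{-1}(j)}$ directly.
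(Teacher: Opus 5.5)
Your direct geometric argument is essentially the paper's proof. The paper obtains $b^{-1}$ from $b$ by a horizontal rotation followed by crossing changes, which has the same net effect as your top-to-bottom mirror. In both versions strand $i$ of $b^{-1}$ is strand $\pi^{-1}(i)$ of $b$, over/under roles are preserved and every sign flips, giving $l_{ij}=-m_{\pi^{-1}(i)\pi^{-1}(j)}$.

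Your side remark is also correct. Read with the literal definition $n_{ij}=m_{\rho^{-1}(i)\rho^{-1}(j)}$, the quoted formula $C(b_1b_2)=C(b_1)+\pi(C(b_2))$ would yield the inverted identity $l_{ij}=-m_{\pi(i)\pi(j)}$; it is only consistent with the action $m_{\pi(i)\pi(j)}$, as $C(\delta^2)$ confirms. So you were right not to rely on it.
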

\medskip 

\begin{proof}
Since $b^{-1}$ is obtained from $b$ by applying a horizontal rotation and then applying crossing changes at all crossings in a braid diagram, the $i^{th}$ strand of $b^{-1}$ corresponds to the $\pi^{-1}(i)^{th}$ strand of $b$. 
Since all corresponding signs are opposite between $b$ and $b^{-1}$, we have the relation $l_{i j}=- m_{\pi^{-1}(i) \ \pi^{-1}(j)}$. 
(See Figure \ref{fig-inverse}.)
\end{proof}
\medskip 

\begin{figure}[ht]
\centering
\includegraphics[width=11cm]{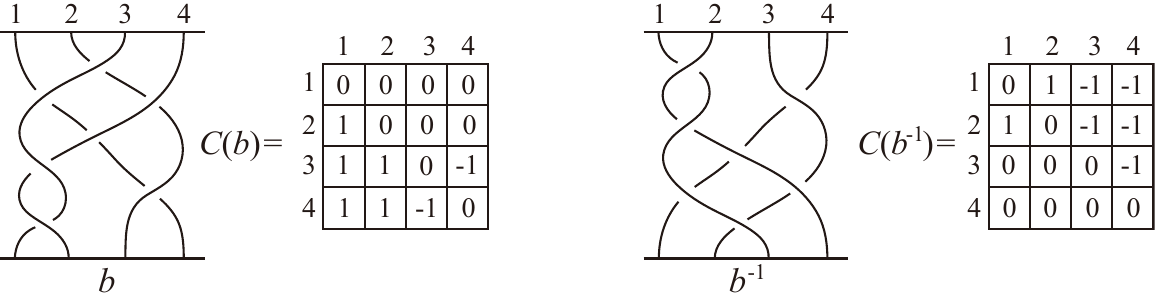}
\caption{A 4-braid $b$ with braid permutation $\pi(1,2,3,4)=(4,3,1,2)$ and its inverse. }
\label{fig-inverse}
\end{figure}

\noindent Two $n \times n$ matrices $A$ and $A'$ are {\it permutation equivalent} if $A'$ is obtained from $A$ by the same permutation to the rows and columns. 
For pure braids, we have the following. 

\medskip 
\begin{lemma}[\cite{AS}]
If two pure braids $b$ and $b'$ are conjugate, then the crossing matrices $C(b)$ and $C(b')$ are permutation equivalent.
\label{lem-CM-p-perm}
\end{lemma}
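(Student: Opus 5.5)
The statement to prove is Lemma \ref{lem-CM-p-perm}: conjugate pure braids have permutation-equivalent crossing matrices. I will use the product formula $C(b_1b_2)=C(b_1)+\pi(C(b_2))$ together with Proposition \ref{prop-CM-inverse}.

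Suppose $b'=a^{-1}ba$ with $b$ pure, and let $\pi$ be the braid permutation of $a$. The braid permutation of $a^{-1}$ is then $\pi^{-1}$. Apply the product formula twice:
$$C(a^{-1}ba)=C(a^{-1})+\pi^{-1}(C(ba)).$$
Since $b$ is pure, its braid permutation is the identity, so $C(ba)=C(b)+C(a)$. Hence
$$C(b')=C(a^{-1})+\pi^{-1}(C(b))+\pi^{-1}(C(a)).$$

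The main step is to show that the two terms involving $a$ cancel, i.e. $C(a^{-1})=-\pi^{-1}(C(a))$. One has to be careful which of $\pi$ and $\pi^{-1}$ appears, since this depends on the conventions.

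\begin{itemize}
\item By Proposition \ref{prop-CM-inverse}, $C(a^{-1})=[l_{ij}]$ with $l_{ij}=-m_{\pi^{-1}(i)\pi^{-1}(j)}$, where $C(a)=[m_{ij}]$.
\item By the definition $\rho(M)_{ij}=m_{\rho^{-1}(i)\rho^{-1}(j)}$, this says exactly $C(a^{-1})=-\pi(C(a))$, not $-\pi^{-1}(C(a))$.
\end{itemize}

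This shows the cancellation depends on how the product formula indexes permutations. I will resolve it directly instead of relying on the convention.

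Apply the product formula to $a^{-1}a=id_n$, whose crossing matrix is $0$:
$$0=C(a^{-1})+\pi^{-1}(C(a)).$$
This gives $C(a^{-1})=-\pi^{-1}(C(a))$ in the convention of that formula. So, taking the product formula as given, its own consistency forces the cancellation, regardless of how Proposition \ref{prop-CM-inverse} is phrased.

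Substituting back into the expression for $C(b')$, the two $a$-terms cancel and
$$C(b')=\pi^{-1}(C(b)).$$
The right-hand side is obtained from $C(b)$ by applying the same permutation to its rows and its columns. Therefore $C(b)$ and $C(b')$ are permutation equivalent.
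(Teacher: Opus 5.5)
Your argument is correct. The paper gives no proof of Lemma \ref{lem-CM-p-perm}; it is quoted from \cite{AS}. The nearest argument in the text is the strand-tracking in the proof of Proposition \ref{prop-conj-perm}, where the $j$th strand of $a^{-1}ba$ is identified with the $\varphi^{-1}(j)$th strand of $b$.

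Read that way, the lemma is diagrammatic. Each crossing of $a^{-1}$ is the mirror image of a crossing of $a$. Since $b$ is pure, the two mirror crossings involve the same pair of strands of $a^{-1}ba$, with the same strand on top and opposite signs, so they cancel. The crossings of $b$ then contribute $C(b)$ with its indices relabeled by $\varphi$.

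You argue purely algebraically instead. You use only the shape of the product formula $C(xy)=C(x)+P_x(C(y))$, where $P_x$ is a simultaneous row/column permutation that is trivial when $x$ is pure, together with $C(id_n)=0$. Purity enters exactly at $C(ba)=C(b)+C(a)$. The geometric route produces the explicit permutation immediately and shows why purity is needed. Yours needs no diagram bookkeeping. Because you derive the inverse formula from $a^{-1}a=id_n$ rather than quoting Proposition \ref{prop-CM-inverse}, it is also insensitive to conventions.

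That robustness matters, because the mismatch you spotted is genuine. With the paper's definition $\rho(M)_{ij}=m_{\rho^{-1}(i)\rho^{-1}(j)}$, Proposition \ref{prop-CM-inverse} is correct, and it is the product formula that should read $C(b_1b_2)=C(b_1)+\pi^{-1}(C(b_2))$. For example, take $\delta\in\mathcal{B}_4$ with braid permutation $\pi$. The listed $C(\delta^2)$ equals $C(\delta)+\pi^{-1}(C(\delta))$, not $C(\delta)+\pi(C(\delta))$.

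Running your argument with the corrected formula yields $C(b')=\pi(C(b))$, i.e.\ $C(b')_{ij}=C(b)_{\pi^{-1}(i)\pi^{-1}(j)}$. This agrees with the strand correspondence above. The conclusion that $C(b)$ and $C(b')$ are permutation equivalent holds in either convention.
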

\medskip

\noindent Lemma \ref{lem-CM-p-perm} does not hold for non-pure braids. 
For general braids, we define the {\it purified} crossing matrix as follows. 

\medskip 
\begin{definition}
Let $b$ be a braid whose order of braid permutation is $r$. 
The {\it purified crossing matrix}, $C^{\text{pur}}(b)$,  is defined by $C^{\text{pur}}(b)=C(b^r)$. 
\end{definition}
\medskip 

\noindent Note that $C^{\text{pur}}(b)$ can be obtained from $C(b)$ and the braid permutation $\pi$ of $b$ by Proposition \ref{prop-C-br}. 
We have the following.

\medskip 
\begin{proposition}[\cite{AS}]
If two braids $b$ and $b'$ are conjugate, then the purified crossing matrices $C^{\text{pur}}(b)$ and $C^{\text{pur}}(b')$ are permutation equivalent. 
\label{prop-CM-pure-b}
\end{proposition}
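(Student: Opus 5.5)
We need to prove: if $b$ and $b'$ are conjugate braids, then $C^{\text{pur}}(b)$ and $C^{\text{pur}}(b')$ are permutation equivalent. The idea is to reduce to the pure case, Lemma \ref{lem-CM-p-perm}.

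Suppose $b'=a^{-1}ba$ and let $\pi$, $\pi'$ be the braid permutations of $b$, $b'$. The braid permutation map $\mathcal{B}_n\to S_n$ is a homomorphism, so $\pi'$ is conjugate to $\pi$ in $S_n$. Conjugate permutations have the same order, so both have some order $r$. Hence $C^{\text{pur}}(b)=C(b^r)$ and $C^{\text{pur}}(b')=C(b'^r)$.

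Next, $b'^r=(a^{-1}ba)^r=a^{-1}b^ra$, because the intermediate factors $aa^{-1}$ cancel. So $b^r$ and $b'^r$ are conjugate in $\mathcal{B}_n$. Both are pure braids, since $\pi^r=\pi'^r=\mathrm{id}$.

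Lemma \ref{lem-CM-p-perm} applies to conjugate pure braids. Its statement allows the conjugating braid to be arbitrary, here $a$, which need not be pure. It gives that $C(b^r)$ and $C(b'^r)$ are permutation equivalent, which is the claim.

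The only point needing care is this last use of Lemma \ref{lem-CM-p-perm}: it must allow a non-pure conjugator. If that were in doubt, I would check it directly with the product formula $C(b_1b_2)=C(b_1)+\pi(C(b_2))$ and Proposition \ref{prop-CM-inverse}. Let $\alpha$ be the permutation of $a$ and write $p=b^r$, which is pure. Then
$$C(a^{-1}pa)=C(a^{-1})+\alpha^{-1}(C(p))+\alpha^{-1}(C(a)).$$
By Proposition \ref{prop-CM-inverse}, $C(a^{-1})=-\alpha^{-1}(C(a))$. The first and last terms cancel, leaving $C(a^{-1}pa)=\alpha^{-1}(C(p))$, which is permutation equivalent to $C(p)$.
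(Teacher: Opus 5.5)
Your argument is the paper's: conjugate braids have braid permutations of the same order $r$, $b'^r=a^{-1}b^ra$ are conjugate pure braids, and Lemma~\ref{lem-CM-p-perm} finishes the proof (you also justify the equal orders via the homomorphism to $S_n$, which the paper only asserts). One small convention slip in your optional direct check: with the paper's convention $\rho(M)_{ij}=m_{\rho^{-1}(i)\rho^{-1}(j)}$, Proposition~\ref{prop-CM-inverse} gives $C(a^{-1})=-\alpha(C(a))$, not $-\alpha^{-1}(C(a))$; but the paper's stated product formula is off by the same inversion (in that convention it should read $C(b_1b_2)=C(b_1)+\pi^{-1}(C(b_2))$), so a consistent computation still cancels to $C(a^{-1}pa)=\alpha(C(p))$ and your conclusion stands.
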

\medskip 

\begin{proof}
If $b \stackrel{{ \mathrm{conj}}}{\backsim} b' \in \mathcal{B}_n$, then they have the same order $r$ of braid permutation and $b^r \stackrel{{ \mathrm{conj}}}{\backsim} (b')^r \in \mathcal{P}_n$. 
Hence, this proposition holds by Lemma \ref{lem-CM-p-perm}. 
\end{proof}
\medskip

\noindent To determine whether two given matrices are permutation equivalent or not, we generally need to examine up to $n!$ permutations. 
In \cite{AS, AY-H}, the multiset of entries, determinant, characteristic polynomial, etc., of $C^{\text{pur}}(b)$ are used as reasonable conjugacy invariants. 
(See Appendix \ref{section-conjugacy} for more details.)

\subsection{Crossing matrix of integer-cabled braids}
\label{subsection-integer}

For the crossing matrix of integer-cabled braids, we have the following proposition. 

\medskip
\begin{proposition}
Let $b \in \mathcal{B}_n$. 
Let $b^{(k)}=f(b_1, b_2, \dots , b_n;b) \in \mathcal{B}_{n+1}$ with $b_1= \dots =b_{k-1}=b_{k+1}= \dots = b_n=id_1$ and $b_k=id_2$ ($k \in \{ 1, 2, \dots , n \}$). 
Then $C(b^{(k)})$ is obtained from $C(b)$ as follows. 
Divide $C(b)$ into $3 \times 3$ blocks by separating $k-1, 1, n-k$ rows and columns as follows: 
\begin{align*}
C(b)= \begin{bmatrix}
A_{11} & A_{12} & A_{13} \\
A_{21} & 0 & A_{23} \\
A_{31} & A_{32} & A_{33} 
\end{bmatrix}.
\end{align*}
The crossing matrix of $b^{(k)}$ is given by 
\begin{align*}
C(b^{(k)})= \begin{bmatrix}
A_{11} & A_{12} & A_{12} & A_{13} \\
A_{21} & 0 & 0 & A_{23} \\
A_{21} & 0 & 0 & A_{23} \\
A_{31} & A_{32} & A_{32} & A_{33} 
\end{bmatrix}.
\end{align*}
Namely, $C(b^{(k)})$ is obtained from $C(b)$ by duplicating the $k^{th}$ row and column. 
When $k=1$ or $n$, this is obtained from a $2 \times 2$ block decomposition. 
\label{prop-CM-2}
\end{proposition}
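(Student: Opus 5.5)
We argue directly from a diagram. Fix a braid diagram $B$ of $b$ and build a diagram $B^{(k)}$ of $b^{(k)}$ by replacing the $k^{th}$ strand $s_k$ with two parallel copies running side by side inside a thin tube. Label the strands of $b^{(k)}$ by their upper endpoints. Strands $s_1,\dots,s_{k-1}$ of $b$ become strands $1,\dots,k-1$. The two copies of $s_k$ become strands $k$ and $k+1$. Strands $s_{k+1},\dots,s_n$ become strands $k+2,\dots,n+1$. Under this labelling, new index $i$ corresponds to old index $\phi(i)$, where $\phi(i)=i$ for $i\le k$, $\phi(k+1)=k$, and $\phi(i)=i-1$ for $i\ge k+2$. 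The claimed formula is exactly the statement that the $(i,j)$ entry of $C(b^{(k)})$ equals $m_{\phi(i)\phi(j)}$ whenever $\{i,j\}\neq\{k,k+1\}$, and equals $0$ when $\{i,j\}=\{k,k+1\}$.

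Next we classify the crossings of $B^{(k)}$. A crossing of $B$ between two strands $s_p,s_q$ with $p,q\neq k$ is unchanged, so it contributes to the entry indexed by their new labels. A crossing of $B$ between $s_k$ and some $s_j$ with $j\neq k$ becomes two crossings in $B^{(k)}$, one for each copy of $s_k$. Since the tube is thin and the copies are parallel, both new crossings have the same over/under information as the original crossing. They also have the same sign, because the sign depends only on the orientations (all downward) and on which strand passes over. Hence if $s_j$ is over $s_k$ with sign $\varepsilon$, then $s_j$ is over each of strands $k$ and $k+1$ with sign $\varepsilon$, and symmetrically when $s_k$ is over $s_j$. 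Summing over all crossings gives the duplicated $k^{th}$ row and column, namely the blocks $A_{12},A_{21},A_{23},A_{32}$ appearing twice.

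Finally, because the two copies run parallel with no twisting, strands $k$ and $k+1$ never cross each other. This gives the zero $2\times 2$ block, and the diagonal entries are zero by definition. The cases $k=1$ and $k=n$ are the same argument with the empty blocks omitted.

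The main point requiring care is making the cabling diagram precise. The diagram $B^{(k)}$ obtained by doubling a strand in the projection must be a legitimate diagram of $f(b_1,\dots,b_n;b)$ with $b_k=id_2$. This holds because we may choose the tube so that the two parallel copies project to nearby parallel arcs. Independence of $C$ from the choice of diagram then finishes the argument. One should also confirm that the labelling by upper endpoints matches $\phi$: the two copies of $s_k$ occupy adjacent positions $k,k+1$ at the top, and all later strands shift by one.
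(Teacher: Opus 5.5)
Your argument is correct and is essentially the paper's own. The paper states Proposition~\ref{prop-CM-2} without a separate proof and treats it as immediate from the picture of two parallel strands in a thin tube, which is the same reasoning it writes out in the proof of Proposition~\ref{prop-CM-cable}: each copy of $s_k$ inherits every crossing of $s_k$ with the other strands, signs included, and the two copies never cross each other. Your explicit relabelling $\phi$, together with your remark that choosing the two copies to project as nearby parallel arcs is what makes $b_k=id_2$ produce the zero $2\times 2$ block, makes that implicit argument precise.
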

\medskip 

\noindent Iterating the construction of Proposition \ref{prop-CM-2} yields the crossing matrix of $f(k_1, k_2, \dots , k_n;b)$ for any $k_1, k_2, \dots , k_n \in \mathbb{Z}_+$. 

\medskip 
\begin{example}
For the 4-braid $b$ in Figure \ref{fig-cb}, the crossing matrix of $f(1,2,1,1;b)$ is obtained by duplicating the $2^{nd}$ row and column of $C(b)$. 
Then the crossing matrix of $f(1,2,1,2;b)$ is obtained by duplicating the $5^{th}$ row and column of $C(f(1,2,1,1;b))$. 
\begin{figure}[ht]
\centering
\includegraphics[width=13.5cm]{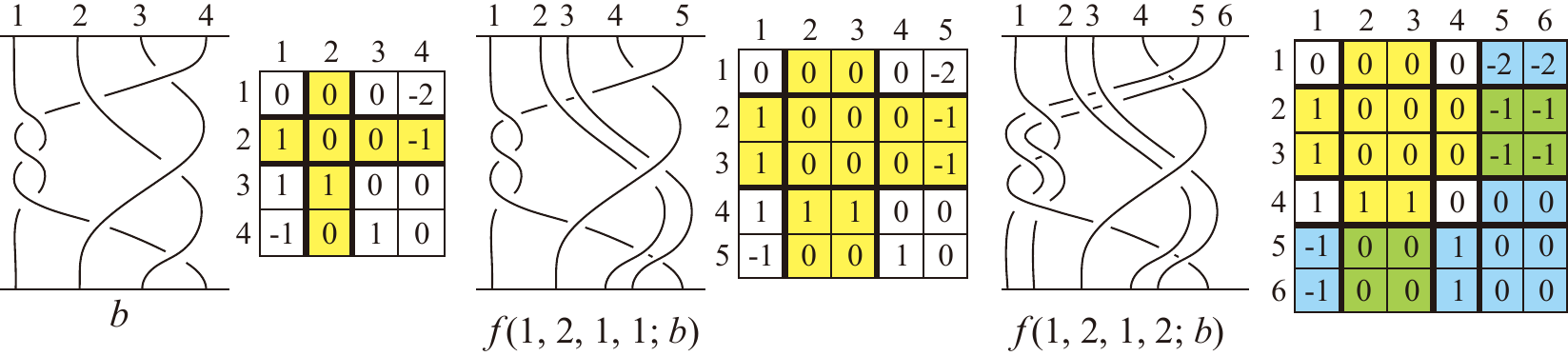}
\caption{$C(b)$, $C(f(1,2,1,1;b))$, and $C(f(1,2,1,2;b))$.}
\label{fig-cb}
\end{figure}
\end{example}
\medskip

\subsection{Crossing matrix of cabled braids}

In Section \ref{subsection-integer}, we observed crossing matrices of integer-cabled braids. 
In this subsection, we investigate crossing matrices for general cabling. \\

The crossing matrix of the cabled braid $f(b_1, b_2, \dots , b_n ; b)$ is obtained as follows. 
Let $b_i \in \mathcal{B}_{k_i}$. 
For $C(b)$, apply the transformation of Proposition \ref{prop-CM-2} repeatedly to obtain $C(f(k_1, k_2, \dots , k_n;b))$. 
Then replace the $k_i \times k_i$ zero submatrix indexed by rows and columns $k_1+k_2+\dots +k_{i-1}+1, \dots , k_1+k_2+\dots +k_{i-1}+k_i$ with $C(b_i)$ for each $i \in \{ 1, 2, \dots , n\}$.

\medskip 
\begin{example}
Let $b$ be the 4-braid in Figure \ref{fig-cb}. 
The crossing matrix of the cabled braid $C(f(b_1, b_2, b_3, b_4 ;b))$ with $b_1=b_3=b_4=id_1 \in \mathcal{B}_1$, $b_2=\sigma_2^{-1} \sigma_1 \in \mathcal{B}_3$ is shown in Figure \ref{fig-cb2}. 
\begin{figure}[ht]
\centering
\includegraphics[width=12cm]{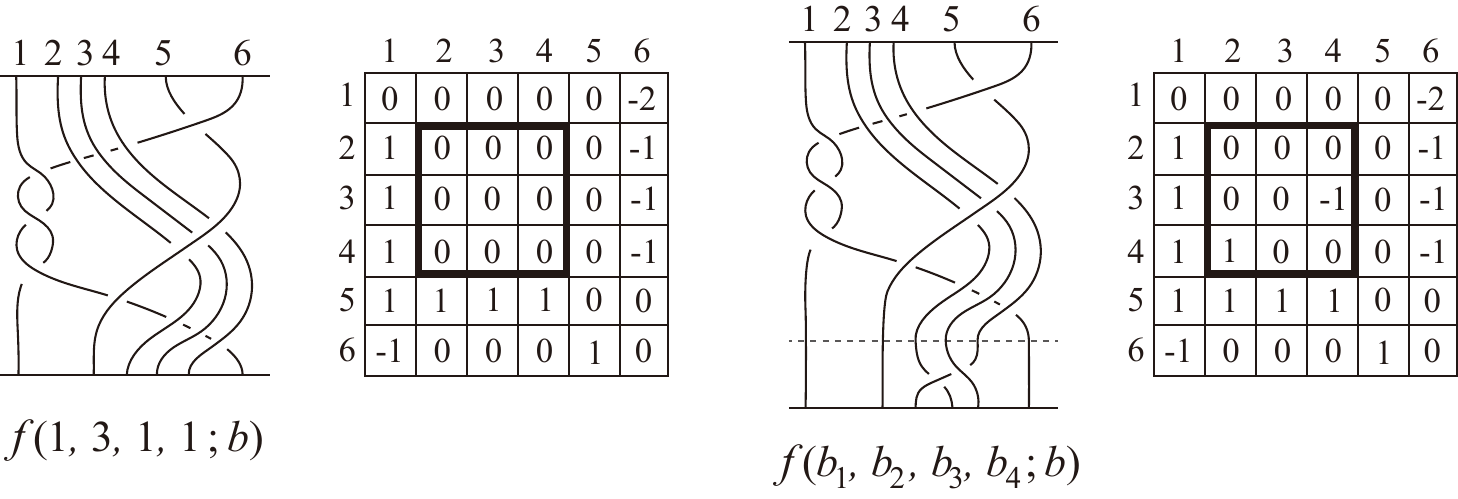}
\caption{$C(f(1,3,1,1;b))$ and $C(f(b_1, b_2, b_3, b_4 ;b))$ with $b_1=b_3=b_4=id_1 \in \mathcal{B}_1$, $b_2= \sigma_2^{-1} \sigma_1 \in \mathcal{B}_3$.}
\label{fig-cb2}
\end{figure}
\end{example}
\medskip

\begin{definition}
An $n \times n$ matrix $M$ is said to {\it admit a cable-block decomposition} if $M$ is in the form of $m \times m$ blocks ($1<m<n$) as 
\begin{align*}
M=\begin{bmatrix}
A_{11} & A_{12} & \dots & A_{1m} \\
A_{21} & A_{22} & \dots & A_{2m} \\
\vdots & \vdots & \ddots & \vdots \\
A_{m1} & A_{m2} & \dots & A_{mm} 
\end{bmatrix}
\end{align*}
such that the block $A_{ii}$ is a square matrix for each $i \in \{ 1, 2, \dots , m \}$ and all the entries in $A_{i j}$ are equal for each pair $i \neq j \in \{ 1, 2, \dots , m \}$. 
\end{definition}
\medskip 

\noindent The following proposition gives a necessary condition for a braid to be cabled. 

\medskip 
\begin{proposition}
If $b \in \mathcal{B}_n$ is cabled, then the crossing matrix $C(b)$ admits a cable-block decomposition. 
\label{prop-CM-cable}
\end{proposition}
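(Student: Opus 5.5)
We argue directly from the definition of a cabled braid. Suppose $b=f(b_1,\dots,b_m;a)$ with $m\ge 2$, $a\in\mathcal{B}_m$, $b_i\in\mathcal{B}_{k_i}$, $k_1+\dots+k_m=n$, and at least one $b_i\neq id_1$. The strands of $b$ then fall into $m$ groups $N_1,\dots,N_m$, where $N_i$ is the set of strands lying in the tube of the $i^{th}$ strand of $a$; by the way cabling is drawn these are consecutive index sets of sizes $k_1,\dots,k_m$. Taking the blocks indexed by $N_k\times N_l$ gives the candidate block decomposition of $C(b)$, with each diagonal block $A_{kk}$ square of size $k_k$. This is exactly the description given just before the statement: $C(b)$ is $C(f(k_1,\dots,k_m;a))$ with the diagonal zero blocks replaced by the $C(b_i)$. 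So the two things to prove are that the off-diagonal blocks are constant, and that $1<m<n$.

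For constancy of the off-diagonal blocks we use a diagram argument. Choose a diagram of $a$ and cable it so that the tubes are thin. Every crossing between a strand in $N_k$ and a strand in $N_l$ with $k\neq l$ then comes from a crossing of strands $k$ and $l$ of $a$. Each crossing of $a$ where strand $k$ passes over strand $l$ becomes a $k_k\times k_l$ grid of crossings. In this grid every strand of tube $k$ passes over every strand of tube $l$ exactly once, and each of these crossings has the same sign as the original crossing. Hence for every $i\in N_k$ and $j\in N_l$ the entry $m_{ij}$ equals the corresponding entry of $C(a)$, so $A_{kl}$ is constant. This can also be obtained by iterating Proposition \ref{prop-CM-2}, which duplicates rows and columns and therefore produces exactly constant off-diagonal blocks. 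Inserting the $C(b_i)$ changes only the diagonal blocks, so the off-diagonal blocks stay constant.

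It remains to check $1<m<n$. We have $m\ge 2$ by the definition of a cabled braid. Since some $b_i\neq id_1$, that $b_i$ has $k_i\ge 2$; indeed a $1$-braid is trivial, so $b_i\in\mathcal{B}_1$ would force $b_i=id_1$. Therefore $n=\sum k_i\ge m+1$, so $m<n$. The main obstacle is this $1$-braid observation, which is what excludes the trivial decomposition into $1\times 1$ blocks. Apart from that, the only care needed is to make the crossing-grid sign claim precise, and this is immediate from the local picture of a cabled crossing.
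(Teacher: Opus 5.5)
Your proof is correct and follows the same route as the paper: you partition the strands by tubes and observe that every pair of strands from two different tubes inherits the same crossing data from the corresponding pair of strands of $a$, so each off-diagonal block is constant. Your crossing-grid sign argument and your check that $1<m<n$ (via $\mathcal{B}_1=\{id_1\}$) make explicit details that the paper's short proof leaves implicit.
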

\medskip 

\begin{proof}
Let $b \in \mathcal{B}_n$ be a cabled braid with $m$ tubes. 
Suppose that the $i^{th}$ tube has $k_i$ strands. 
Partition the rows and columns of $C(b)$ according to the $m$ tubes, whose sizes are $k_1, k_2, \dots , k_m$. 
Then each $A_{ii}$ block is a $k_i \times k_i$ square matrix. 
Every pair consisting of a strand in the $i^{th}$ tube and a strand in the $j^{th}$ tube has the same crossing information. 
Hence all the entries in $A_{i j}$ are equal for each pair $i \neq j$. 
Therefore, $C(b)$ admits an $m \times m$ cable-block decomposition. 
\end{proof}
\medskip

\begin{example}
The crossing matrix $C(b)$ of the braid $b$ in Figure \ref{fig-cb} does not admit a cable-block decomposition with any of the block-size patterns $(1,3)$, $(2,2)$, $(3,1)$, $(1,1,2)$, $(1,2,1)$, $(2,1,1)$. 
Hence, $b$ is not cabled by the contrapositive of Proposition \ref{prop-CM-cable}. 
\label{ex-non-cabled}
\end{example}
\medskip

\subsection{Proofs}

In this subsection, we prove Theorem \ref{thm-CM-red}, Corollaries \ref{cor-red2} and \ref{cor-anosov}. 
The following lemma will be used for proving Theorem \ref{thm-CM-red}. 

\medskip 
\begin{lemma}
An $n \times n$ zero-diagonal matrix $M$ is the crossing matrix of a standard form of a reducible pure $n$-braid if and only if $M$ is symmetric and admits a cable-block decomposition. 
\label{lem-CM-ch}
\end{lemma}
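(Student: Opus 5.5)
We prove the two directions separately, using Lemma \ref{lem-CM-pure} and the description of crossing matrices of cabled braids given before Proposition \ref{prop-CM-cable}.

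\emph{Only if.} Let $M=C(f(b_1,\dots,b_m;a))$ with $1<m<n$, $a\in\mathcal{P}_m$ and $b_i\in\mathcal{P}$. Since $f(b_1,\dots,b_m;a)$ is a pure $n$-braid, Lemma \ref{lem-CM-pure} shows that $M$ is symmetric. For the block structure we cannot quote Proposition \ref{prop-CM-cable} verbatim: a standard form may have every $b_i=id_1$, and then the braid need not be ``cabled'' in the sense of the definition. So we repeat its argument directly. Partition the rows and columns according to the $m$ tubes, of sizes $k_1,\dots,k_m$. Any strand in the $i$-th tube and any strand in the $j$-th tube, with $i\neq j$, cross exactly as the core strands $s_i$ and $s_j$ of $a$ do. Hence every entry of $A_{ij}$ equals the $(i,j)$-entry of $C(a)$. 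Since $1<m<n$, this is a cable-block decomposition.

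\emph{If.} Let $M$ be symmetric and zero-diagonal, with a cable-block decomposition into blocks $A_{ij}$ of sizes $k_1,\dots,k_m$, where $1<m<n$. Let $c_{ij}$ be the common value of the entries of $A_{ij}$ for $i\ne j$, and set $c_{ii}=0$. Symmetry of $M$ gives $A_{ji}=A_{ij}^{T}$, so $c_{ij}=c_{ji}$ and $C_0=[c_{ij}]$ is an $m\times m$ zero-diagonal symmetric matrix. Each diagonal block $A_{ii}$ is also zero-diagonal and symmetric, because it is a principal submatrix of $M$. By Lemma \ref{lem-CM-pure} we choose $a\in\mathcal{P}_m$ with $C(a)=C_0$ and $b_i\in\mathcal{P}_{k_i}$ with $C(b_i)=A_{ii}$; when $k_i=1$ we take $b_i=id_1$. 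By the recipe for crossing matrices of cabled braids (iterate Proposition \ref{prop-CM-2}, then insert the $C(b_i)$), $C(f(b_1,\dots,b_m;a))$ is obtained from $C(a)$ by duplicating its $i$-th row and column until it has $k_i$ copies, and then putting $C(b_i)$ in the $i$-th diagonal block. The result is exactly $M$. Finally, $f(b_1,\dots,b_m;a)$ is pure, because $a$ and all $b_i$ are pure, and $1<m<n$, so it is a standard form of a reducible pure braid.

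The main point to get right is the labeling. The blocks of a cable-block decomposition are consecutive index intervals, and the strands of $f(b_1,\dots,b_m;a)$ in tube $i$ are labeled consecutively in the same order. This works because $a$ is pure, so the tube order at the top of the braid is $1,\dots,m$. The only other thing to check is the degenerate case where all $k_i=1$ except some, which is fine.
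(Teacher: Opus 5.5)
Your proof is correct and follows the same route as the paper: Lemma \ref{lem-CM-pure} gives symmetry and realizes both $C_0$ and the diagonal blocks, the tube-by-tube crossing count gives the off-diagonal blocks, and the cabling recipe assembles $f(b_1,\dots,b_m;a)$. Your caveat about Proposition \ref{prop-CM-cable} is unnecessary: $\sum k_i=n>m$ forces some $k_i\ge 2$, so a standard form is always cabled, which is why the paper quotes that proposition directly. Likewise, purity of $a$ plays no role in the consecutive labeling of the tube strands, since strands are labeled by their upper endpoints for any braid.
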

\medskip 

\begin{proof}
Suppose that $a$ is in a standard form of a reducible pure braid. 
Since $a$ is pure, $C(a)$ is symmetric by Lemma \ref{lem-CM-pure}. 
Since $a$ is cabled, $C(a)$ admits a cable-block decomposition by Proposition \ref{prop-CM-cable}. 
Hence $C(a)$ satisfies the condition. 

Next, suppose that $M$ is a symmetric $n \times n$ matrix that admits a cable-block decomposition with $m \times m$ blocks ($1<m<n$) as follows. 
\begin{align*}
M = \begin{bmatrix}
A_{11} & A_{12} & \dots & A_{1m} \\
A_{21} & A_{22} & \dots & A_{2m} \\
\vdots & \vdots & \ddots & \vdots \\
A_{m1} & A_{m2} & \dots & A_{mm} 
\end{bmatrix}.
\end{align*}
Let $a_{ij}$ be the value of the entries of the submatrix $A_{ij}$ for each $i \neq j$. 
Since $M$ is symmetric, we have $a_{ij}=a_{ji}$ for each $i \neq j$. 
Let $T= [t_{ij}]$ be an $m \times m$ zero-diagonal matrix with $t_{ij}=a_{ij}$ for each $i \neq j$. 
Since $T$ is symmetric, there exists a pure braid, say $b$, such that $C(b)=T$ by Lemma \ref{lem-CM-pure}. 
Since $M$ is symmetric, the submatrices $A_{ii}$ are also symmetric. 
By Lemma \ref{lem-CM-pure}, again, there exists a pure $k_i$-braid $b_i$ such that $C(b_i)=A_{ii}$ for each $i$. 
Then, the matrix $M$ is the crossing matrix of the cabled braid $c=f(b_1, b_2, \dots , b_m;b)$. 
Note that $c$ is in a standard form of a reducible pure braid. 
\end{proof}
\medskip

\begin{example}
Let $M$ be a symmetric $7 \times 7$ matrix that admits a cable-block decomposition with $3 \times 3$ blocks with $2, 2, 3$ rows and columns as follows: 
\begin{align*}
M=
\begin{bmatrix}
0 & 1 & 1 & 1 & -1 & -1 & -1 \\
1 & 0 & 1 & 1 & -1 & -1 & -1 \\
1 & 1 & 0 & 2 & 2 & 2 & 2 \\
1 & 1 & 2 & 0 & 2 & 2 & 2 \\
-1 & -1 & 2 & 2 & 0 & 2 & 1 \\
-1 & -1 & 2 & 2 & 2 & 0 & -1 \\
-1 & -1 & 2 & 2 & 1 & -1 & 0 \\
\end{bmatrix}.
\end{align*}
Let 
\begin{align*}
T=
\begin{bmatrix}
0 & 1 & -1 \\
1 & 0 & 2 \\
-1 & 2 & 0 
\end{bmatrix}, \ A_{11}=
\begin{bmatrix}
0 & 1 \\
1 & 0
\end{bmatrix}, \ A_{22}= 
\begin{bmatrix}
0 & 2 \\
2 & 0
\end{bmatrix}, \ A_{33}=
\begin{bmatrix}
0 & 2 & 1 \\
2 & 0 & -1 \\
1 & -1 & 0
\end{bmatrix}.
\end{align*}
The braids $b=\sigma_1 \sigma_2^{-2}\sigma_1\sigma_2^4 \in \mathcal{B}_3$, $b_1 = \sigma_1^2, \ b_2= \sigma_1^4 \in \mathcal{B}_2$, and $b_3= \sigma_1^3 \sigma_2^2 \sigma_1 \sigma_2^{-2} \in \mathcal{B}_3$ satisfy\footnote{The BW-ladder diagram introduced in \cite{AY-5} is useful to construct a pure braid for given matrix in some cases.} 
$C(b)=T$, $C(b_1)=A_{11}$, $C(b_2)=A_{22}$, and $C(b_3)=A_{33}$. 
Hence, $f(b_1, b_2, b_3;b)$ has $M$ as its crossing matrix. 
\end{example}
\medskip

\noindent The converse of Proposition \ref{prop-CM-cable} does not hold as shown in the following example. 

\medskip 
\begin{example}
For the weaving braid $b=(\sigma_1 \sigma_2^{-1})^3 \in \mathcal{P}_3$, we have $C(b)=O$. 
Although this $C(b)$ admits a $1 \times 2$ cable-block decomposition, this braid $b$ is not reducible. 
Indeed, this braid can be classified as pseudo-Anosov (\cite{TM}). 
Thus, the crossing matrix does not completely determine whether a given pure braid is reducible or not. 
\end{example}
\medskip

\noindent Next, we prove Theorem \ref{thm-CM-red}. 

\medskip 
\begin{proof}[Proof of Theorem \ref{thm-CM-red}. ]
\begin{itemize}
\item[($\Rightarrow$)] Suppose that $A=C(b)$ for a reducible pure $n$-braid $b$. 
By Proposition \ref{prop-GM}, $b$ is conjugate to a standard form $b' \in \mathcal{P}_n$. 
By Lemma \ref{lem-CM-p-perm}, $A$ is permutation equivalent to the crossing matrix $C(b')=A'$. 
By Lemma \ref{lem-CM-ch}, $A'$ is symmetric and has a cable-block decomposition. 
Let $\rho$ be a permutation on $\{ 1, 2, \dots , n \}$ such that $\rho (A)=A'$. 
Choose a cable-block decomposition of $A'$ with $m$ blocks as 
\begin{align*}
A' = \begin{bmatrix}
A'_{11} & A'_{12} & \dots & A'_{1m} \\
A'_{21} & A'_{22} & \dots & A'_{2m} \\
\vdots & \vdots & \ddots & \vdots \\
A'_{m1} & A'_{m2} & \dots & A'_{mm} 
\end{bmatrix}.
\end{align*}
For $s \in \{ 1, 2, \dots , m \}$, take $N_s$ as the set of the row-indices of the entries of $A$ that correspond to the entries in the diagonal block $A'_{ss}$ of $A'$ with respect to $\rho$. 
Then, $N=N_1 \cup N_2 \cup \dots \cup N_m$ and $N_s \cap N_t= \emptyset$ for distinct $s, t \in \{ 1, 2, \dots , m \}$. 
\item[($\Leftarrow$)] Suppose that $A=[a_{ij}]$ is a zero-diagonal symmetric matrix and there is a partition $N_1\amalg N_2\amalg \dots \amalg N_m=N$ such that for any distinct $s, t \in \{ 1, 2, \dots , m \}$ and any $(i,j)$, $(i', j') \in N_s \times N_t$, $a_{ij}=a_{i'j'}$. 
Define an order ``$\ll$'' on $N$ as follows. 
For $a, b \in N$, if one of the following holds, we denote $a \ll b$. 
\begin{itemize}
\item[(i)] $a, b \in N_s$ for some $s \in \{ 1, 2, \dots , m \}$ and $a<b$. 
\item[(ii)] $a \in N_s$ and $b \in N_t$ for some $s, t \in \{ 1, 2, \dots , m\}$ with $s<t$. 
\end{itemize}
Then, all the elements of $N$ is ordered as follows: $p_1 \ll p_2 \ll \dots \ll p_n$ with $\{ p_1, p_2, \dots , p_n \} =N$. 
Then, $N_s= \{ p_{\sum_{u=1}^{s-1} |N_u|+1}, p_{\sum_{u=1}^{s-1} |N_u|+2}, \dots , p_{\sum_{u=1}^{s} |N_u|} \}$ for each $s \in \{ 1, 2, \dots , m \}$, and we denote it by $N_s= \{ p^{(s)}_1, p^{(s)}_2, \dots , p^{(s)}_{|N_s|} \}$, where $p^{(s)}_1<p^{(s)}_2<\dots < p^{(s)}_{|N_s|}$. 
Let $\rho$ be a permutation on $N$ such that $\rho(p_i)=i$ for $i \in \{ 1, 2, \dots , n \}$. 
Then $\rho(N_s)= \{ \sum_{u=1}^{s-1}|N_u|+1, \sum_{u=1}^{s-1} |N_u|+2, \dots , \sum_{u=1}^s |N_u| \}$ for each $s \in \{ 1, 2, \dots , m \}$, and we have 
\begin{align*}
\rho(A)= 
\begin{bmatrix}
A'_{11} & \dots & A'_{1m} \\
\vdots & \ddots & \vdots \\
A'_{m1} & \dots & A'_{mm}
\end{bmatrix}, \text{ where } \ A'_{st}= 
\begin{bmatrix}
a_{p^{(s)}_1 p^{(t)}_1} & \dots & a_{p^{(s)}_1 p^{(t)}_{|N_t|}} \\
a_{p^{(s)}_2 p^{(t)}_1} & \dots & a_{p^{(s)}_2 p^{(t)}_{|N_t|}} \\
\vdots & \ddots & \vdots \\
a_{p^{(s)}_{|N_s|} p^{(t)}_1} & \dots & a_{p^{(s)}_{|N_s|} p^{(t)}_{|N_t|}} \\
\end{bmatrix} \text{ for each } s,t\in\{1,2,\dots ,m\}. 
\end{align*}
If $s \neq t$, then $p^{(s)}_i \neq p^{(t)}_j$ for any $p^{(s)}_i \in N_s$ and any $p^{(t)}_j \in N_t$ (for any $i \in \{ 1, 2, \dots , |N_s| \}$ and any $j \in \{ 1, 2, \dots , |N_t| \}$). 
Hence,  if $s \neq t$, then $a_{p^{(s)}_i p^{(t)}_j}=a_{p^{(s)}_{i'} p^{(t)}_{j'}}$ for any $(p^{(s)}_{i}, p^{(t)}_{j})$, $(p^{(s)}_{i'}, p^{(t)}_{j'}) \in N_s \times N_t$, and we have $A'_{st}=[a]$ for some $a \in \mathbb{Z}$. 
Since $A$ is symmetric, $a_{p^{(t)}_j p^{(s)}_i}=a_{p^{(s)}_i p^{(t)}_j}$, and $A'_{ts}=(A'_{st})^T$. 
If $s=t$, 
\begin{align*}
A'_{ss}= 
\begin{bmatrix}
a_{p^{(s)}_1 p^{(s)}_1} & a_{p^{(s)}_1 p^{(s)}_2} & \dots & a_{p^{(s)}_{p_1} p^{(s)}_{|N_s|}} \\
a_{p^{(s)}_2 p^{(s)}_1} & a_{p^{(s)}_2 p^{(s)}_2} & \dots & a_{p^{(s)}_{p_2} p^{(s)}_{|N_s|}} \\
\vdots & \vdots & \ddots & \vdots \\
a_{p^{(s)}_{|N_s|} p^{(s)}_1} & a_{p^{(s)}_{|N_s|} p^{(s)}_2} & \dots & a_{p^{(s)}_{p_{|N_s|}} p^{(s)}_{|N_s|}} \\
\end{bmatrix}.
\end{align*}
Since $A$ is a zero-diagonal symmetric matrix, we have $a_{p^{(s)}_i p^{(s)}_i}=0$ and $a_{p^{(s)}_i p^{(s)}_j}=a_{p^{(s)}_j p^{(s)}_i}$ for any $p^{(s)}_i,  p^{(s)}_j \in N_s$. 
Thus, $A'_{ss}$is a zero-diagonal symmetric matrix. 
Since $A'_{ts}=(A'_{st})^T$, $\rho(A)$ is symmetric and admits a cable-block decomposition, and therefore $\rho(A)$ is the crossing matrix of a standard form of a reducible pure braid $b'$ by Lemma \ref{lem-CM-ch}. Take a braid $a \in \mathcal{B}_n$ whose braid permutation is $\rho$, and let $b=a^{-1}b'a$. Then $b \in \mathcal{P}_n$, and $b$ is reducible because reducibility is invariant under conjugation. Moreover, $C(b)=\rho^{-1}(C(b'))=\rho^{-1}(\rho(A))=A$. 
This completes the proof. 
\end{itemize}

\end{proof}
\medskip 

\noindent We prove Corollary \ref{cor-red2}. 

\medskip 
\begin{proof}[Proof of Corollary \ref{cor-red2}.] 
By Lemmas \ref{lem-CM-p-perm} and \ref{lem-CM-ch}, $A$ is permutation equivalent to a matrix $M$ with a cable-block decomposition. 
Choose a diagonal block $A_{ii}$, with size two or more, of the block decomposition of $M$. 
We can take such a block because $m<n$. 
Take $S$ as the set of row-indices of the entries of $A$ that correspond to the entries in $A_{ii}$. 
\end{proof}

\noindent We prove Corollary \ref{cor-anosov}. 

\medskip 
\begin{proof}[Proof of Corollary \ref{cor-anosov}. ]
By Lemma \ref{lem-CM-pure}, there exists a pure $n$-braid $b$ such that $C(b)=A$.
By Corollary \ref{cor-red2}, $b$ is not reducible. Thus, $b$ is pseudo-Anosov. 
(Note that every periodic pure $n$-braid is reducible for $n\ge 3$.)
\end{proof}
\medskip

\begin{example}
Let $b= \sigma_1\sigma_2\sigma_3^{-4}\sigma_2^{3}\sigma_1\sigma_2^{-1}\sigma_3^{-2}\sigma_2^{-3}\sigma_3^2 \in \mathcal{P}_4$. 
Then the crossing matrix is 
\begin{align*}
C(b)=
\begin{bmatrix}
0 & 1 & 2 & -2 \\
1 & 0 & -2 & -1 \\
2 & -2 & 0 & 1 \\
-2 & -1 & 1 & 0 
\end{bmatrix}. 
\end{align*}
Since all entries in each row of $C(b)$ are different, $b$ is pseudo-Anosov by Corollary \ref{cor-anosov}.
\label{ex-anosov}
\end{example}
\medskip

\section{Periodic braids}
\label{section-periodic}

Throughout this section, $n$ is a fixed integer with $n\geq 2$. 
We give a necessary condition for a pure braid to be periodic in terms of the crossing matrix. 
Let $\delta = \sigma_1 \sigma_2 \dots \sigma_{n-1}$, $\gamma = \sigma_1 \delta =  \sigma_1^2 \sigma_2 \dots \sigma_{n-1} \in \mathcal{B}_n$ as shown in Figure \ref{fig-gamma}. 
\begin{figure}[ht]
\centering
\includegraphics[width=6cm]{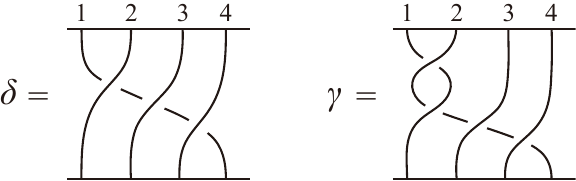}
\caption{$\delta$ and $\gamma \in \mathcal{B}_4$.}
\label{fig-gamma}
\end{figure}
An $n$-braid $b \in \mathcal{B}_n$ is {\it periodic} if there exists a non-zero integer $k \in \mathbb{Z}$ such that $b \stackrel{{ \mathrm{conj}}}{\backsim} \delta^k$ or $\gamma^k$. 
We have the following formula for the crossing matrices of $\delta^k$ and $\gamma^k$ for non-negative $k$.  

\medskip 
\begin{proposition}
Let $p, q$ be non-negative integers with $q \leq n-1$. 
Let $C(\delta^{pn+q})=[ m_{ij}]$, $C(\gamma^{p(n-1)+q})=[ n_{ij}]$. 
Then 
\begin{align*}
m_{ij} =
\begin{cases}
    0 & \text{if $i=j$,} \\
    p+1 & \text{if $j \leq q, \ i \neq j$, } \\
    p  & \text{otherwise,}
\end{cases}
\end{align*}
\begin{align*}
\ n_{ij} =
\begin{cases}
    0 & \text{if $i=j$,} \\
    p+1 & \text{if $(j=1, \ 2 \leq i \leq q+1)$ or $(2 \leq j \leq q+1, \ i \neq j)$,} \\
    p  & \text{otherwise.}
\end{cases}
\end{align*}
\label{prop-delta-gamma}
\end{proposition}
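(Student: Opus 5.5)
The plan is to compute the two matrices by a direct count of crossings, after splitting off full twists. Let $\Delta^2$ be the positive full twist in $\mathcal{B}_n$. It is classical that $\delta^n=\Delta^2$ and $\gamma^{n-1}=\Delta^2$; I will quote these identities, or check them from the figure of $\delta$ and $\gamma$. So $\delta^{pn+q}=(\Delta^2)^p\delta^q$ and $\gamma^{p(n-1)+q}=(\Delta^2)^p\gamma^q$.

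First I compute $C(\Delta^2)$. In the full twist every pair of strands $s_i,s_j$ meets in exactly two positive crossings, once with $s_i$ over and once with $s_j$ over. Hence $C(\Delta^2)$ has $0$ on the diagonal and $1$ everywhere else. Since $\Delta^2$ is pure, its braid permutation is the identity. The product formula $C(b_1b_2)=C(b_1)+\pi(C(b_2))$ with $\pi=\mathrm{id}$ then gives
\[
C(\delta^{pn+q})=p\,(J-I)+C(\delta^q),\qquad C(\gamma^{p(n-1)+q})=p\,(J-I)+C(\gamma^q),
\]
where $J$ is the all-ones matrix. This reduces the proposition to the case $p=0$, that is, to proving the claimed formulas for $C(\delta^q)$ with $q\le n-1$ and for $C(\gamma^q)$ with $q\le n-1$.

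For $\delta^q$ I track strands factor by factor. One factor $\delta=\sigma_1\cdots\sigma_{n-1}$ carries the strand at position $1$ to position $n$, passing once under each other strand; all other strands shift one position to the left. So in the $k$-th factor ($k\le q\le n-1$) the strand crossing under all the others is exactly $s_k$. Thus $s_j$ with $j\le q$ lies under every other strand $s_i$ in exactly one crossing, and strands $s_j$ with $j>q$ are never the under-strand. This gives $m_{ij}=1$ for $j\le q$, $i\ne j$, and $0$ otherwise. It is consistent for pairs with $i,j\le q$: each of the two strands is under once.

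For $\gamma=\sigma_1\delta=\sigma_1^2\sigma_2\cdots\sigma_{n-1}$, strand $s_1$ stays at position $1$. Within one factor, the strand at position $2$ first crosses $s_1$ via $\sigma_1$ and comes back via the second $\sigma_1$, which gives one crossing with each strand over. It then travels under all the remaining strands to position $n$, and the others shift left. So in the $k$-th factor ($k\le q$) the moving strand is $s_{k+1}$. It is under every other strand once, and in addition $s_{k+1}$ is over $s_1$ once. This yields $n_{ij}=1$ exactly when $2\le j\le q+1$, $i\ne j$, or when $j=1$, $2\le i\le q+1$, and $0$ otherwise. Adding $p(J-I)$ gives the stated formula.

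The main obstacle is bookkeeping with the conventions. I must fix which strand is over in $\sigma_i$ (Figure \ref{fig-s}), since that determines whether the column or the row carries the $1$'s, and I must confirm the identities $\delta^n=\gamma^{n-1}=\Delta^2$. I will settle the convention by checking the case $n=3$, $q=1$ directly against the figure. I expect the inductive strand-tracking itself to be routine once that is fixed.
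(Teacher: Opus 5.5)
Your proof is correct, but it is organized differently from the paper's. The paper writes down $C(\delta)$ and $C(\gamma)$: a single column of $1$'s for $\delta$, and column $2$ plus the entry $(2,1)$ for $\gamma$. It then invokes the power formula $C(b^r)=\sum_{k=0}^{r-1}\pi^k(C(b))$ of Proposition \ref{prop-C-br} with $r=pn+q$ (resp.\ $r=p(n-1)+q$) and leaves the summation implicit. The implicit step is that $\pi$ is an $n$-cycle (resp.\ an $(n-1)$-cycle fixing $1$), so each full period of $k$ contributes $J-I$ and the last $q$ terms contribute the extra $1$'s.

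You instead split off $p$ full twists and use the product formula only with the identity permutation. You then count crossings directly in the remaining $q\le n-1$ factors by identifying, in each factor, the strand that passes under all the others. This is the power-formula sum unrolled by hand. Your route is more self-contained and shows why exactly columns $1,\dots,q$ (resp.\ columns $2,\dots,q+1$ and the entries $(i,1)$ with $2\le i\le q+1$) receive the extra $1$.

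Your count also sidesteps a subtlety the paper's one-line argument depends on: whether $\pi$ or $\pi^{-1}$ acts in $\pi^k(C(b))$. Applied literally with the paper's conventions, $\pi^k(C(\delta))$ has its nonzero column at $\pi^k(1)$, so the sum produces columns $1,n,n-1,\dots$ instead of $1,\dots,q$. The paper's route is shorter and reuses its general machinery.

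Finally, you do not need the classical identities $\delta^n=\gamma^{n-1}=\Delta^2$. Running your own strand count up to $q=n$ for $\delta$ and $q=n-1$ for $\gamma$ shows that $\delta^n$ and $\gamma^{n-1}$ are pure with crossing matrix $J-I$, and that is all your reduction uses.
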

\medskip 

\begin{proof}
For $C(\delta)=[m_{i j}]$ and $C(\gamma)=[n_{i j}]$, we have 
\begin{align*}
m_{ij} =
\begin{cases}
    1 & \text{if $j=1, \ i \neq j \ $, } \\
    0 & \text{otherwise,}
\end{cases} 
\ n_{ij} =
\begin{cases}
    1 & \text{if $(j=1, \ i=2)$ or $(j=2, \ i \neq j)$,} \\
    0 & \text{otherwise.}
\end{cases}
\end{align*}
By Proposition \ref{prop-C-br}, we obtain the formula for $\delta^k$ and $\gamma^k$. 
\end{proof}
\medskip 

\begin{example}
When $\delta, \ \gamma \in \mathcal{B}_4$, we have 
\begin{align*}
C(\delta)=
\begin{bmatrix}
0 & 0 & 0 & 0 \\
1 & 0 & 0 & 0 \\
1 & 0 & 0 & 0 \\
1 & 0 & 0 & 0 
\end{bmatrix}, 
C(\delta^2)=
\begin{bmatrix}
0 & 1 & 0 & 0 \\
1 & 0 & 0 & 0 \\
1 & 1 & 0 & 0 \\
1 & 1 & 0 & 0 
\end{bmatrix}, 
C(\delta^3)=
\begin{bmatrix}
0 & 1 & 1 & 0 \\
1 & 0 & 1 & 0 \\
1 & 1 & 0 & 0 \\
1 & 1 & 1 & 0 
\end{bmatrix}, 
C(\delta^4)=
\begin{bmatrix}
0 & 1 & 1 & 1 \\
1 & 0 & 1 & 1 \\
1 & 1 & 0 & 1 \\
1 & 1 & 1 & 0 
\end{bmatrix}, 
C(\delta^{10})=
\begin{bmatrix}
0 & 3 & 2 & 2 \\
3 & 0 & 2 & 2 \\
3 & 3 & 0 & 2 \\
3 & 3 & 2 & 0 
\end{bmatrix}, 
\end{align*}
\begin{align*}
C(\gamma)=
\begin{bmatrix}
0 & 1 & 0 & 0 \\
1 & 0 & 0 & 0 \\
0 & 1 & 0 & 0 \\
0 & 1 & 0 & 0 
\end{bmatrix}, 
\ C(\gamma^2)=
\begin{bmatrix}
0 & 1 & 1 & 0 \\
1 & 0 & 1 & 0 \\
1 & 1 & 0 & 0 \\
0 & 1 & 1 & 0 
\end{bmatrix}, 
\ C(\gamma^3)=
\begin{bmatrix}
0 & 1 & 1 & 1 \\
1 & 0 & 1 & 1 \\
1 & 1 & 0 & 1 \\
1 & 1 & 1 & 0 
\end{bmatrix}, 
\ C(\gamma^8)=
\begin{bmatrix}
0 & 3 & 3 & 2 \\
3 & 0 & 3 & 2 \\
3 & 3 & 0 & 2 \\
2 & 3 & 3 & 0 
\end{bmatrix}. 
\end{align*}
\end{example}
\medskip

\noindent Throughout this section, $D$ denotes an $n \times n$ matrix such that the diagonal entries are zero and the other entries are 1. 
For the purified crossing matrix $C^{\text{pur}}(b)$, we have the following. 

\medskip 
\begin{proposition}
The purified crossing matrices of $\delta^k$ and $\gamma^k \in \mathcal{B}_n$ are given as $C^{\text{pur}}(\delta^k)=pD$, $C^{\text{pur}}(\gamma^k)=qD$ for some $p, q \in \mathbb{Z}$. 
In particular, $C^{\text{pur}}(\delta^k)=kD$ if $\text{gcd}(k,n)=1$, and $C^{\text{pur}}(\gamma^k)=kD$ if $\text{gcd}(k, n-1)=1$. 
\label{prop-c-4}
\end{proposition}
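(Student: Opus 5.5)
We plan to compute $C^{\text{pur}}(\delta^k)$ and $C^{\text{pur}}(\gamma^k)$ directly from Proposition \ref{prop-C-br} and Proposition \ref{prop-delta-gamma}. The braid permutation of $\delta$ is an $n$-cycle, so the braid permutation of $\delta^k$ has order $r=n/\gcd(k,n)$. Hence $C^{\text{pur}}(\delta^k)=C((\delta^k)^r)=C(\delta^{kr})$. Since $kr=\operatorname{lcm}(k,n)$ is a multiple of $n$, we may write $kr=pn$ with $p=k/\gcd(k,n)$. Applying Proposition \ref{prop-delta-gamma} with $q=0$, every off-diagonal entry equals $p$, so $C^{\text{pur}}(\delta^k)=pD$. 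When $\gcd(k,n)=1$ we get $r=n$ and $p=k$, which gives $kD$.

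For $\gamma$, the braid permutation of $\gamma=\sigma_1\delta$ fixes one point and cycles the remaining $n-1$ points; one checks that it is an $(n-1)$-cycle. So the order of the permutation of $\gamma^k$ is $r=(n-1)/\gcd(k,n-1)$, and $kr=p(n-1)$ with $p=k/\gcd(k,n-1)$. Proposition \ref{prop-delta-gamma} with $q=0$ then gives $C(\gamma^{p(n-1)})=pD$, since the condition for the value $p+1$ requires $j\le q+1=1$ together with $2\le i\le 1$, which is vacuous, or $2\le j\le 1$, also vacuous. When $\gcd(k,n-1)=1$ we get $p=k$.

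Negative $k$ needs separate care, since Proposition \ref{prop-delta-gamma} assumes non-negative exponents. For $k<0$ we use $C^{\text{pur}}(\delta^k)=C((\delta^{|k|r})^{-1})$. By Proposition \ref{prop-CM-inverse}, the crossing matrix of the inverse of a pure braid is the negative of its crossing matrix. So we get $-\frac{|k|}{\gcd}D=\frac{k}{\gcd}D$, and the same argument works for $\gamma$. The case $k=0$ is trivial and gives $0\cdot D$.

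The main obstacle is correctly identifying the cycle structure of the braid permutation of $\gamma$; the rest reduces to reading off Proposition \ref{prop-delta-gamma}. If $\gamma$'s permutation had a different cycle type, we would instead take $r$ to be its order and still express $kr$ as a multiple of $n-1$ using Proposition \ref{prop-C-br}. One also needs $\gamma^{n-1}$ to be pure, which is consistent with $C(\gamma^{n-1})=D$ being symmetric.
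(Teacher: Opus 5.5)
Your proposal is correct and is essentially the paper's argument. Like the paper, you determine the order of the braid permutation of $\delta^k$ (resp.\ $\gamma^k$), rewrite $C^{\text{pur}}$ as $C(\delta^{kn/\gcd(k,n)})$ (resp.\ $C(\gamma^{k(n-1)/\gcd(k,n-1)})$), and read off Proposition~\ref{prop-delta-gamma} with $q=0$. Your separate handling of $k<0$ via Proposition~\ref{prop-CM-inverse} is a small but real improvement, because the paper applies Proposition~\ref{prop-delta-gamma}, which is stated only for non-negative exponents, without comment.
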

\medskip 

\begin{proof}The order of the braid permutation of $\delta, \gamma$ are $n, n-1$, respectively, and $\delta^n$ and $\gamma^{n-1}$ are pure braids. Suppose that $\text{gcd}(k,n)=m$ (or $\text{gcd}(k,n-1)=m$, resp.). Then, $C^{\text{pur}}(\delta^k)=C(\delta^{kn/m})$ (or $C^{\text{pur}}(\gamma^k)=C(\gamma^{k(n-1)/m})$, resp.). By Proposition \ref{prop-delta-gamma}, $C(\delta^{kn/m})=\frac{k}{m}D$ (or $C(\gamma^{k(n-1)/m})=\frac{k}{m}D$, resp.).
\end{proof}
\medskip

\medskip 
\begin{example}
Let $p \in \mathbb{Z}$. 
The purified crossing matrices of $\delta^k$ and $\gamma^k \in \mathcal{B}_4$ are as follows. 
\begin{align*}
\begin{cases}
C^{\text{pur}}(\delta^{4p})=C(\delta^{4p})=pD, \\
C^{\text{pur}}(\delta^{4p+1})=C((\delta^{4p+1})^4)=(4p+1)D, \\
C^{\text{pur}}(\delta^{4p+2})=C((\delta^{4p+2})^2)=(2p+1)D, \\
C^{\text{pur}}(\delta^{4p+3})=C((\delta^{4p+3})^4)=(4p+3)D, \\
\end{cases}
\begin{cases}
C^{\text{pur}}(\gamma^{3p})=C(\gamma^{3p})=pD, \\
C^{\text{pur}}(\gamma^{3p+1})=C((\gamma^{3p+1})^3)=(3p+1)D, \\
C^{\text{pur}}(\gamma^{3p+2})=C((\gamma^{3p+2})^3)=(3p+2)D.
\end{cases}
\end{align*}
\label{ex-c-4}
\end{example}
\medskip

\begin{example}
Let $b \in \mathcal{B}_4$ be the 4-braid shown in Figure \ref{fig-cb} with order of braid permutation 2. 
Then 
\begin{align*}
C(b)=
\begin{bmatrix}
0 & 0 & 0 & -2 \\
1 & 0 & 0 & -1 \\
1 & 1 & 0 & 0 \\
-1 & 0 & 1 & 0 
\end{bmatrix}, 
\ C^{\text{pur}}(b)=C(b^2)=
\begin{bmatrix}
0 & 1 & 0 & -3 \\
1 & 0 & 1 & 0 \\
0 & 1 & 0 & 1 \\
-3 & 0 & 1 & 0 
\end{bmatrix}.  
\end{align*}
Among all powers of $\delta$ and $\gamma$, only braids of the form $\delta^{4p+2}$ with $p \in \mathbb{Z}$ have braid permutation of order 2. 
By Proposition \ref{prop-c-4}, $C^{\text{pur}}(\delta^{4p+2})=(2p+1)D$ and this is not permutation equivalent to $C^{\text{pur}}(b)$. 
Hence, $b$ is not periodic because $b$ is not conjugate to $\delta^k$ or $\gamma^k$ for any $k$. 
\end{example}
\medskip 

\noindent We have the following lemma. 

\medskip 
\begin{lemma}
If a pure braid $b\in {\mathcal P}_n$ is periodic, then $C(b)=pD$ for some $p\in {\mathbb Z}$. 
\label{lem-periodic}
\end{lemma}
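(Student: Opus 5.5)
If $b\in\mathcal{P}_n$ is periodic, then by definition $b\stackrel{\mathrm{conj}}{\backsim}\delta^k$ or $b\stackrel{\mathrm{conj}}{\backsim}\gamma^k$ for some nonzero integer $k$. Conjugation preserves the braid permutation up to conjugacy, so the power $\delta^k$ or $\gamma^k$ must itself be pure. The braid permutations of $\delta$ and $\gamma$ have orders $n$ and $n-1$ (as noted in the proof of Proposition \ref{prop-c-4}). Hence $n\mid k$ in the first case and $(n-1)\mid k$ in the second. For $n=2$ the $\gamma$ case is just a power of $\sigma_1^2$ and is handled the same way.

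The first step is to compute the crossing matrix of the pure power. For $k\geq 0$, write $k=pn$ (respectively $k=p(n-1)$). Proposition \ref{prop-delta-gamma} with $q=0$ gives $C(\delta^{pn})=pD$ and $C(\gamma^{p(n-1)})=pD$, since the ``$p+1$'' cases are then vacuous. For $k<0$, I would use Proposition \ref{prop-CM-inverse}. Since $\delta^{pn}$ is pure, its permutation is trivial, so $C(\delta^{-pn})=-C(\delta^{pn})=-pD$, and similarly for $\gamma$. So in every case the crossing matrix is $p'D$ for some $p'\in\mathbb{Z}$.

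The second step transfers this to $b$. Both $b$ and the pure power are pure and conjugate, so by Lemma \ref{lem-CM-p-perm} $C(b)$ is permutation equivalent to $p'D$. Simultaneously permuting the rows and columns of $D$ leaves $D$ unchanged, because $D$ has zeros exactly on the diagonal and ones elsewhere. Therefore $C(b)=p'D$.

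The only mildly delicate point is justifying that conjugate braids have conjugate permutations, so that the purity of $b$ forces the purity of $\delta^k$ or $\gamma^k$. This holds because the map to the symmetric group is a homomorphism: $\pi(a^{-1}ca)=\pi(a)^{-1}\pi(c)\pi(a)$, and the only conjugate of the identity is the identity.
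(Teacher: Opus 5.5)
Your proof is correct and follows essentially the same route as the paper. Purity forces $n\mid k$ or $(n-1)\mid k$, the pure power has crossing matrix a multiple of $D$, and Lemma \ref{lem-CM-p-perm} together with the invariance of $D$ under simultaneous row and column permutations gives $C(b)=pD$. The paper obtains the middle step from Proposition \ref{prop-c-4}, but that proposition's proof relies on Proposition \ref{prop-delta-gamma}, which only covers nonnegative exponents; your treatment of $k<0$ via Proposition \ref{prop-CM-inverse} therefore fills a small gap the paper leaves implicit.
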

\medskip 

\begin{proof}
Since $b$ is periodic, $b \stackrel{{ \mathrm{conj}}}{\backsim} \delta^k$ or $\gamma^k$ for some $k\in {\mathbb Z}$. 
Since $b$ is pure, if $b \stackrel{{ \mathrm{conj}}}{\backsim} \delta^k$, then $k \equiv 0 \pmod{n}$, and if $b \stackrel{{ \mathrm{conj}}}{\backsim} \gamma^k$, then $k \equiv 0 \pmod{n-1}$. 
By Proposition \ref{prop-c-4}, then, $C(\delta^k)=C^{\text{pur}}(\delta^k)=pD$ for some $p$, or $C(\gamma^k)=C^{\text{pur}}(\gamma^k)=pD$ for some $p$. 
Any permutation-equivalent matrix to $pD$ is the matrix $pD$ itself, and therefore we have $C(b)=pD$. 
\end{proof}
\medskip


\appendix
\section{Enhanced conjugacy invariants}
\label{section-conjugacy}

In this appendix, we use cabling to enhance conjugacy invariants.

\subsection{Properties of cabled braids}

In this subsection, we explore properties of cabled braids that are used to prove Proposition \ref{prop-p-Fb}. 
For the braid product, we have the following lemma. 

\medskip 
\begin{lemma}
Let $b, b' \in \mathcal{B}_n$. 
Let $\varphi$ be the braid permutation of $b$. 
Let $b_i, b'_{\varphi(i)} \in \mathcal{B}_{k_i}$ ($i=1, 2, \dots , n$). 
We have 
\begin{align*}
f(b_1, b_2, \dots , b_n;b) f(b'_{1}, b'_{2}, \dots , b'_{n};b') =
f(b_1 b'_{\varphi(1)}, b_2 b'_{\varphi(2)}, \dots , b_n b'_{\varphi(n)};bb').
\end{align*}
\label{lem-product}
\end{lemma}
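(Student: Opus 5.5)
The plan is to argue geometrically, stacking the two cabled braids and tracking each tube through the concatenation. Write $c=f(b_1,\dots,b_n;b)$ and $c'=f(b'_1,\dots,b'_n;b')$.

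First, fix the geometry. The braid $b$ sits in a slab $\mathbb{R}^2\times[0,1]$, with $c$ built inside tubes $T_1,\dots,T_n$ around the strands $s_1,\dots,s_n$. Likewise $b'$ sits in a slab $\mathbb{R}^2\times[1,2]$, with $c'$ built inside tubes $T'_1,\dots,T'_n$ around its strands $s'_1,\dots,s'_n$.

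The product $bb'$ is obtained by gluing along the level $1$. The strand $s_i$ of $b$ ends at position $\varphi(i)$, so it is joined to $s'_{\varphi(i)}$, and together they form the $i^{th}$ strand of $bb'$. The hypothesis $b_i,b'_{\varphi(i)}\in\mathcal{B}_{k_i}$ is exactly what makes the two tubes carry the same number of substrands, so the endpoints match across the gluing level.

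Second, normalize the tubes near the gluing level. By an isotopy supported near each endpoint, I will assume that near the level $1$:
\begin{itemize}
\item every tube is a vertical round cylinder;
\item the tube contents there are trivial, i.e.\ the $k_i$ substrands are straight, parallel, and lie at fixed standard positions along a horizontal segment.
\end{itemize}
This is possible because the cabling construction only specifies $b_i$ up to isotopy inside a small tubular neighborhood, and we can squeeze the nontrivial part of $b_i$ into the middle portion of the tube. The same normalization applies to each $b'_{\varphi(i)}$. Then $T_i\cup T'_{\varphi(i)}$ is a single tube around the $i^{th}$ strand of $bb'$.

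Third, read off the contents of this combined tube. Inside it we first see $b_i$ and then $b'_{\varphi(i)}$, and the substrand endpoints agree in order at the junction. So, after straightening the tube back along the core strand, its contents are the product braid $b_ib'_{\varphi(i)}$. Hence $cc'=f(b_1b'_{\varphi(1)},\dots,b_nb'_{\varphi(n)};bb')$.

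The main obstacle is the framing issue. Cabling implicitly uses a blackboard framing of each tube, so we must check that the glued tube does not acquire an extra twist relative to the blackboard framing of $bb'$. Otherwise the contents would be $b_ib'_{\varphi(i)}$ multiplied by a power of the full twist. I would handle this by fixing, once and for all, the convention that all tubes are cross-sectioned by horizontal segments parallel to the projection plane's horizontal axis, with substrands ordered left to right. Both framings are the horizontal one at level $1$, so the concatenated framing is again the horizontal framing of $bb'$, and no twist is introduced.
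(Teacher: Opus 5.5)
Your proof is correct and takes essentially the same route as the paper's: the $i^{th}$ tube of $f(b_1,\dots,b_n;b)$ is glued to the $\varphi(i)^{th}$ tube of $f(b'_1,\dots,b'_n;b')$, the hypothesis guarantees both tubes carry the same number of strands, and the product $b_ib'_{\varphi(i)}$ is formed inside each combined tube. Your normalization near the gluing level and your observation that the constant horizontal (blackboard) framing concatenates without introducing a twist spell out details that the paper's proof leaves implicit.
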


\begin{proof}
By the braid product on the cabled braids, the $i^{th}$ tube of $f(b_1, b_2, \dots , b_n;b)$ is connected to the $\varphi(i)^{th}$ tube of the other cabled braid. 
They have the same number of strands. 
Within each tube, the braid product $b_i b'_{\varphi(i)}$ can be performed independently. 
In total, we have the cabling of $bb'$. 
\end{proof}
\medskip 

\begin{example}
Let $b=\sigma_1$, $b'=\sigma_2^{-1} \in \mathcal{B}_3$. 
Then the braid permutation of $b$ is $\varphi (1,2,3)=(2,1,3)$. 
Let $b_1=b'_2=id_1 \in \mathcal{B}_1$, $b_2=\sigma_1$, $b'_1=\sigma_1^2 \in \mathcal{B}_2$, $b_3=\sigma_2 \sigma_1$, $b'_3=\sigma_2^{-1} \in \mathcal{B}_3$. 
We have $f(id_1, \sigma_1, \sigma_2\sigma_1;b)f(\sigma_1^2, id_1, \sigma_2^{-1};b')=f(id_1, \sigma_1^3, \sigma_2\sigma_1\sigma_2^{-1};bb')$. 
(See Figure \ref{fig-product}.) 
\begin{figure}[ht]
\centering
\includegraphics[width=8cm]{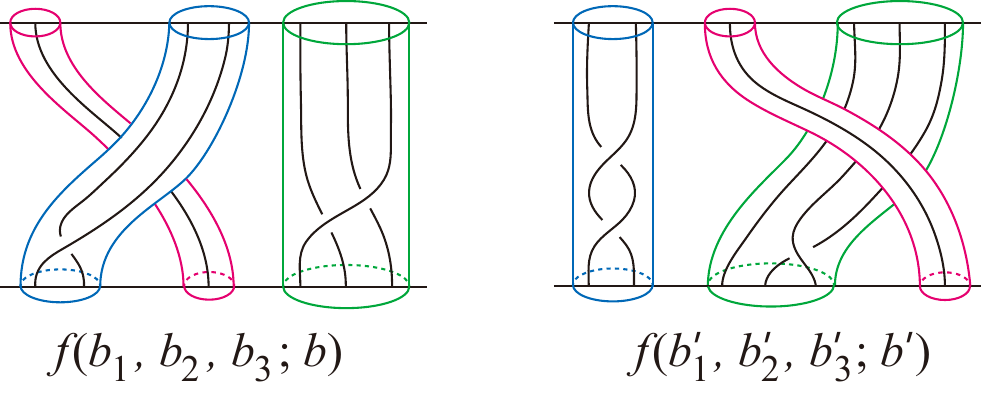}
\caption{Cabled braids. }
\label{fig-product}
\end{figure}
\end{example}
\medskip

\noindent For the inverse, we have the following lemma.

\medskip 
\begin{lemma}
Let $a \in \mathcal{B}_n$, $b_1, b_2, \dots , b_n \in \mathcal{B}$. 
We have $f(b_1, b_2, \dots , b_n;a)^{-1}=f(b_{\varphi^{-1}(1)}^{-1}, b_{\varphi^{-1}(2)}^{-1}, \dots , b_{\varphi^{-1}(n)}^{-1}; a^{-1})$, where $\varphi$ is the braid permutation of $a$. 
\label{lem-inverse}
\end{lemma}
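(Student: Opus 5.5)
The plan is to use Lemma \ref{lem-product} and the uniqueness of inverses in $\mathcal{B}_{k_1+\dots+k_n}$, rather than a diagrammatic argument. Write $\varphi$ for the braid permutation of $a$. The braid permutation of $a^{-1}$ is then $\varphi^{-1}$, since $aa^{-1}=id_n$ and permutations compose multiplicatively. Set
\begin{align*}
c=f(b_{\varphi^{-1}(1)}^{-1}, b_{\varphi^{-1}(2)}^{-1}, \dots , b_{\varphi^{-1}(n)}^{-1}; a^{-1}).
\end{align*}
It suffices to show that $f(b_1, \dots , b_n;a)\, c$ is the trivial braid.

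First I check that the product is defined in the sense of Lemma \ref{lem-product}. The $i^{th}$ tube of $f(b_1,\dots,b_n;a)$ ends at the $\varphi(i)^{th}$ position. The $\varphi(i)^{th}$ entry of the tuple defining $c$ is $b_{\varphi^{-1}(\varphi(i))}^{-1}=b_i^{-1}$, which has the same number of strands $k_i$ as $b_i$. So the hypothesis $b_i, b'_{\varphi(i)}\in\mathcal{B}_{k_i}$ of Lemma \ref{lem-product} holds with $b'_j=b_{\varphi^{-1}(j)}^{-1}$.

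Applying Lemma \ref{lem-product} then gives
\begin{align*}
f(b_1,\dots,b_n;a)\,c = f(b_1 b'_{\varphi(1)}, \dots , b_n b'_{\varphi(n)}; aa^{-1}) = f(b_1b_1^{-1},\dots,b_nb_n^{-1}; id_n)=f(id_{k_1},\dots,id_{k_n};id_n).
\end{align*}
The last expression is the trivial braid $id_{k_1+\dots+k_n}$, because cabling trivial strands of the trivial braid with trivial braids produces only straight vertical strands. Hence $c$ is a right inverse of $f(b_1,\dots,b_n;a)$, and in a group a right inverse is the inverse.

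The only delicate step is the index bookkeeping: correctly identifying the braid permutation of $a^{-1}$ as $\varphi^{-1}$, and seeing that the reindexing $b'_j=b^{-1}_{\varphi^{-1}(j)}$ makes $b'_{\varphi(i)}=b_i^{-1}$. This relies on the paper's convention that $\pi(i)$ is the lower position of the strand starting at position $i$. Everything else follows formally from Lemma \ref{lem-product}.
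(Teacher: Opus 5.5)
Your proof is correct and follows essentially the paper's route: compose the candidate inverse with $f(b_1,\dots,b_n;a)$ and observe that the result reduces tubewise to $id_{k_1+\dots+k_n}$. The differences are minor. The paper checks the left product $c\,f(b_1,\dots,b_n;a)$ by a direct tubewise argument, whereas you check the right product and cite Lemma~\ref{lem-product}; this has the small advantage that only the braid permutation $\varphi$ of $a$ enters the index bookkeeping, so your remark about the permutation of $a^{-1}$ is not actually needed.
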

\medskip 

\begin{proof}
Let $b_i \in \mathcal{B}_{k_i}$ ($i=1, 2, \dots , n$). 
We observe that $f(b^{-1}_{\varphi^{-1}(1)}, b^{-1}_{\varphi^{-1}(2)}, \dots , b^{-1}_{\varphi^{-1}(n)}; a^{-1}) f(b_1, b_2, \dots , b_n;a)$ is transformed tubewise into the trivial braid $id \in \mathcal{B}_{k_1+k_2+\dots +k_n}$ by the transformation corresponding to that of $a^{-1}a$ to $id \in \mathcal{B}_n$ and $b^{-1}_ib_i=id_{k_i}$ for each $i$. 
\end{proof}
\medskip 

\begin{example}
For the braid $a \in \mathcal{B}_3$ shown in Figure \ref{fig-aa} with braid permutation $\varphi (1,2,3)=(3,1,2)$, we have $(f(id_1,id_2,id_3;a))^{-1}=f(id_2,id_3,id_1; a^{-1})$. 
\begin{figure}[ht]
\centering
\includegraphics[width=9cm]{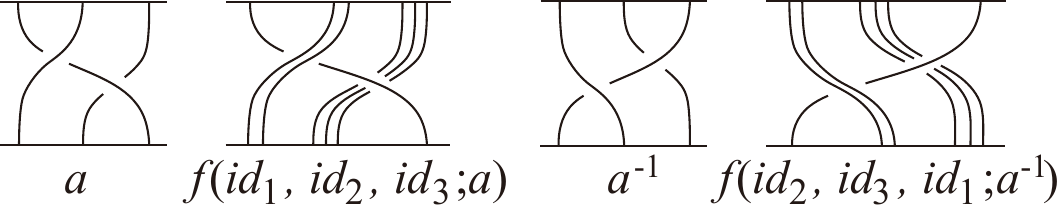}
\caption{$(f(id_1,id_2,id_3;a))^{-1}=f(id_2,id_3,id_1; a^{-1})$.}
\label{fig-aa}
\end{figure}
\end{example}
\medskip

\noindent 
We have the following proposition. 

\medskip 
\begin{proposition}
For each pair of conjugate pure braids $b \stackrel{{ \mathrm{conj}}}{\backsim} b' \in \mathcal{P}_n$, there exists a permutation $\rho$ on $\{ 1, 2, \dots , n \}$ such that $f(b_1, b_2, \dots , b_n ; b) \stackrel{{ \mathrm{conj}}}{\backsim} f(b_{\rho (1)}, b_{\rho (2)}, \dots , b_{\rho (n)}; b')$ for any $b_1, b_2, \dots , b_n \in \mathcal{B}$. 
Moreover, if $a \in \mathcal{B}_n$ conjugates $b$ to $b'$, then $f(b_1, b_2, \dots , b_n; a)$ conjugates  $f(b_1, b_2, \dots , b_n ; b)$ to $ f(b_{\rho (1)}, b_{\rho (2)}, \dots , b_{\rho (n)}; b')$. 
\label{prop-conj-perm}
\end{proposition}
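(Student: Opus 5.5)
The plan is to compute the conjugate $f(\mathbf{\beta}; a)^{-1} f(\mathbf{\beta}; b) f(\mathbf{\beta}; a)$ directly, using Lemma \ref{lem-inverse} and Lemma \ref{lem-product}. The permutation $\rho$ will be the braid permutation of $a$. This choice depends only on $a$, and hence on the pair $b, b'$ once a conjugating braid is fixed, not on the $b_i$.

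Write $\mathbf{\beta}=(b_1,\dots,b_n)$ with $b_i\in\mathcal{B}_{k_i}$, and let $\varphi$ be the braid permutation of $a$, where $b'=a^{-1}ba$.

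\textbf{Step 1 (the product $f(\mathbf{\beta};b)f(\mathbf{\beta};a)$).} Since $b$ is pure, its braid permutation is the identity. Lemma \ref{lem-product} applies because the $i^{th}$ tube on each side carries $k_i$ strands, and it gives
\[ f(\mathbf{\beta};b)\,f(\mathbf{\beta};a)=f(b_1b_1,\dots,b_nb_n;ba) \]
after reading the tube braids correctly. Here one must be careful about what the tube contents mean. In $f(\mathbf{\beta};b)$ the $i^{th}$ tube starts at position $i$ and carries $b_i$. Because $b$ is pure, it ends at position $i$. It then continues into the $i^{th}$ tube of $f(\mathbf{\beta};a)$, which carries $b_i$ again. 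So the product is $f(b_1^2,\dots,b_n^2;ba)$, indexed by the upper positions.

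\textbf{Step 2 (multiply on the left by the inverse).} By Lemma \ref{lem-inverse},
\[ f(\mathbf{\beta};a)^{-1}=f(b_{\varphi^{-1}(1)}^{-1},\dots,b_{\varphi^{-1}(n)}^{-1};a^{-1}). \]
The braid permutation of $a^{-1}$ is $\varphi^{-1}$, so Lemma \ref{lem-product} yields tube contents
\[ b_{\varphi^{-1}(i)}^{-1}\, b^2_{\varphi^{-1}(i)}=b_{\varphi^{-1}(i)}. \]
Therefore
\[ f(\mathbf{\beta};a)^{-1}f(\mathbf{\beta};b)f(\mathbf{\beta};a)=f(b_{\rho(1)},\dots,b_{\rho(n)};a^{-1}ba) \quad\text{with } \rho=\varphi^{-1}. \]
The composite $a^{-1}ba$ equals $b'$, which proves both assertions at once.

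\textbf{Main obstacle: the indexing.} The delicate step is keeping the indexing consistent, since the tube sizes must match at each gluing. I would check the size compatibility hypothesis of Lemma \ref{lem-product} explicitly at each multiplication. In the first product it requires $b_i$ and $b_{\mathrm{id}(i)}$ to have equal size, which is trivial. In the second it requires $b^{-1}_{\varphi^{-1}(i)}$ and the $\varphi^{-1}(i)^{th}$ tube of the next factor to have equal size. I would also double-check that $\rho=\varphi^{-1}$ rather than $\varphi$ by testing on a small example, such as $a=\sigma_1\in\mathcal{B}_2$ with $b_1=id_1$ and $b_2=id_2$. There the strand counts force the answer.
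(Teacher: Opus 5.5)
Your proof is correct and is the paper's argument: both set $\rho=\varphi^{-1}$ for $\varphi$ the braid permutation of $a$, then evaluate $f(\mathbf{\beta};a)^{-1}f(\mathbf{\beta};b)f(\mathbf{\beta};a)$ with Lemmas \ref{lem-inverse} and \ref{lem-product}, obtaining tube contents $b^{-1}_{\varphi^{-1}(i)}b^{2}_{\varphi^{-1}(i)}=b_{\varphi^{-1}(i)}$ over $a^{-1}ba=b'$. Two small fixes: your opening sentence should say $\rho$ is the \emph{inverse} of the braid permutation of $a$, and your sanity check $a=\sigma_1$ cannot distinguish $\varphi$ from $\varphi^{-1}$ because a transposition is its own inverse (a 3-cycle such as $a=\sigma_1\sigma_2\in\mathcal{B}_3$, with tubes of sizes $1,2,3$, would).
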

\medskip

\begin{proof}
Suppose that $b'=a^{-1}ba$ for $a \in \mathcal{B}_n$. 
Let $\varphi$ be the braid permutation of $a$. 
In the form of $a^{-1}ba$, the $i^{th}$ strand of $b$ is the $\varphi (i)^{th}$ strand of $a^{-1}ba$. 
Equivalently, the $j^{th}$ strand of $b'=a^{-1}ba$ is the $\varphi^{-1}(j)^{th}$ strand of $b$. 
Set $\varphi^{-1}=\rho$. 
Then $f(b_1, b_2, \dots , b_n; a)$ conjugates $f(b_1, b_2, \dots , b_n; b)$ to 
\begin{align*}
& \{ f(b_1, b_2, \dots , b_n; a) \}^{-1} f(b_1, b_2, \dots , b_n; b) f(b_1, b_2, \dots , b_n; a) \\
= & f( b^{-1}_{\varphi^{-1}(1)}, b^{-1}_{\varphi^{-1}(2)}, \dots , b^{-1}_{\varphi^{-1}(n)} ;a^{-1}) f(b_1, b_2, \dots , b_n; b) f(b_1, b_2, \dots , b_n; a) \\
= & f( b^{-1}_{\varphi^{-1}(1)}b^{2}_{\varphi^{-1}(1)}, b^{-1}_{\varphi^{-1}(2)}b^{2}_{\varphi^{-1}(2)}, \dots , b^{-1}_{\varphi^{-1}(n)}b^{2}_{\varphi^{-1}(n)}; a^{-1}ba) \\
= & f( b_{\rho (1)},  b_{\rho (2)}, \dots ,  b_{\rho (n)};b')
\end{align*}
By Lemmas \ref{lem-product} and \ref{lem-inverse}. 
\end{proof}
\medskip 

\begin{example}
Let $b,  b'=a^{-1}ba \in \mathcal{P}_3$ be the braids illustrated in Figure \ref{fig-bb}. 
We have $f(b_1, b_2, b_3 ;b) \stackrel{{ \mathrm{conj}}}{\backsim} f(b_1, b_3, b_2 ;b')$ for any $b_1, b_2, b_3 \in \mathcal{B}$. 
\begin{figure}[ht]
\centering
\includegraphics[width=12cm]{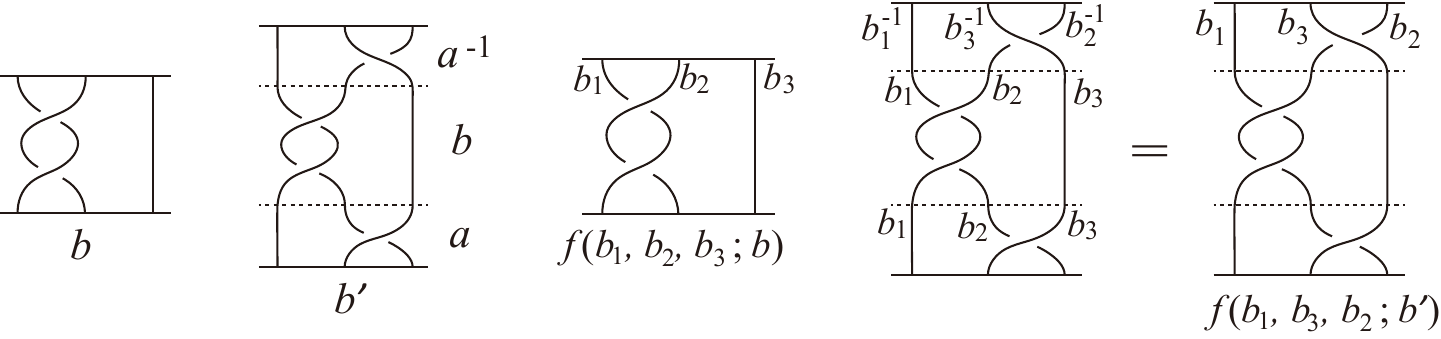}
\caption{$f(b_1, b_2, b_3 ;b) \stackrel{{ \mathrm{conj}}}{\backsim} f(b_1, b_3, b_2 ;b')$.}
\label{fig-bb}
\end{figure}
\label{ex-conj}
\end{example}
\medskip

\subsection{Enhanced conjugacy invariants}

Let $\pi (1, 2, \dots , n)=(\pi (1), \pi(2), \dots , \pi(n))$ be a permutation on $\{ 1, 2, \dots , n \}$. 
Let $\pi(\mathbf{\beta})=(b_{\pi(1)}, b_{\pi(2)}, \dots , b_{\pi(n)})$ denote the tuple of braids that is obtained from $\mathbf{\beta}=(b_1, b_2, \dots , b_n)$ by applying the permutation $\pi$. 
Let $g$ be a conjugacy invariant of braids. 
We attempt to enhance $g$ by cabling. 
We define a multiset consisting of $n!$ entries as follows. 
$$F_{\mathbf{\beta}}(g;b)= \{ g(f(\pi(\mathbf{\beta});b)) \ | \ \pi \text{ is a permutation on } \{ 1, 2, \dots , n \} \} .$$

\noindent Now we prove Proposition \ref{prop-p-Fb}. 

\medskip 
\begin{proof}[Proof of Proposition \ref{prop-p-Fb}.]
Suppose that $a$ conjugates $b$ to $b' \in \mathcal{P}_n$. 
Let $\rho$ be the permutation that is mentioned in Proposition \ref{prop-conj-perm}. 
Then each braid $f(\pi(\mathbf{\beta});b)$ that is obtained from $b$ by the cabling of $\pi(\mathbf{\beta})$ is conjugate to the braid $f(\rho (\pi(\mathbf{\beta}));b')$, and they have the same value of the invariant $g$ by conjugacy. 
Since $\rho$ gives a one-to-one correspondence, we have $F_{\mathbf{\beta}}(g;b)=F_{\mathbf{\beta}}(g;b')$. 
\end{proof}
\medskip

\noindent Let $r$ be the order of braid permutation of $b \in \mathcal{B}_n$. 
Then $b^r \in \mathcal{P}_n$. 
We define $F^{\text{pur}}_{\mathbf{\beta}}(g;b)$ as $F_{\mathbf{\beta}}(g;b^r)$. 
We have the following corollary. 

\medskip 
\begin{corollary}
Let $b$, $b' \in \mathcal{B}_n$. 
If $b$ and $b'$ are conjugate, then $F^{\text{pur}}_{\mathbf{\beta}}(g;b)=F^{\text{pur}}_{\mathbf{\beta}}(g;b')$ for any $n$-tuple of braids $\mathbf{\beta}$ and conjugacy invariant $g$. 
\label{cor-b-Fb}
\end{corollary}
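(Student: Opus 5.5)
The plan is to reduce Corollary \ref{cor-b-Fb} directly to Proposition \ref{prop-p-Fb}. By definition $F^{\text{pur}}_{\mathbf{\beta}}(g;b)=F_{\mathbf{\beta}}(g;b^r)$, where $r$ is the order of the braid permutation of $b$, and $F^{\text{pur}}_{\mathbf{\beta}}(g;b')=F_{\mathbf{\beta}}(g;(b')^{r'})$, where $r'$ is the order for $b'$. It therefore suffices to show that $r=r'$ and that $b^r$ and $(b')^{r}$ are conjugate pure $n$-braids. Then Proposition \ref{prop-p-Fb}, applied to the pair $b^r,(b')^r\in\mathcal{P}_n$, gives the equality of multisets for every $\mathbf{\beta}$ and every conjugacy invariant $g$.

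First we compare the orders. Suppose $b'=a^{-1}ba$ with $a\in\mathcal{B}_n$, and let $\varphi$ and $\pi$ be the braid permutations of $a$ and $b$. The map sending a braid to its braid permutation is a homomorphism, up to the paper's left/right composition convention. Hence the braid permutation of $b'$ is a conjugate of $\pi$ by $\varphi$ in the symmetric group. Conjugate permutations have the same order, so $r=r'$. This is the same observation used in the proof of Proposition \ref{prop-CM-pure-b}, and I would cite it in the same way.

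Next we compare the powers. We have $(b')^r=(a^{-1}ba)^r=a^{-1}b^ra$, since the inner factors $aa^{-1}$ cancel. So $b^r$ and $(b')^r$ are conjugate by the same braid $a$. Both are pure, because their braid permutations are $\pi^r=\mathrm{id}$ and the $r$-th power of the conjugate permutation, which is also the identity. Applying Proposition \ref{prop-p-Fb} completes the argument.

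I do not expect a real obstacle here. The only point needing care is the equality $r=r'$, which is what makes the two sides use the same exponent. Without it, $F^{\text{pur}}$ would compare the powers $b^r$ and $(b')^{r'}$, which need not be conjugate. This step is immediate from the homomorphism property of the braid permutation.
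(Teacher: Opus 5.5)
Your proposal is correct and follows the paper's proof: the paper also obtains the corollary from Proposition \ref{prop-p-Fb}, using that $r=r'$ and that $b^{r}$ and $(b')^{r'}$ are conjugate pure $n$-braids. You additionally justify these two facts (conjugate braid permutations have the same order, and $(a^{-1}ba)^r=a^{-1}b^ra$), which the paper asserts without comment, as it also does in Proposition \ref{prop-CM-pure-b}.
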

\medskip

\begin{proof}
Let $r$ and $r'$ be the orders of braid permutation of $b$ and $b'$, respectively. 
It follows from Proposition \ref{prop-p-Fb} by $r =r'$ and $b^{r} \stackrel{{ \mathrm{conj}}}{\backsim} (b')^{r'} \in \mathcal{P}_n$.
\end{proof}
\medskip

\subsection{The $(2,l)$ sets}

For a braid diagram $B=\sigma_{i_1}^{\varepsilon_1}\sigma_{i_2}^{\varepsilon_2} \dots \sigma_{i_m}^{\varepsilon_m}$, the {\it exponent sum} or the {\it writhe of $B$} is the sum of the crossing signs $w(B)= \sum_{k=1}^{m}\varepsilon_k$. 
For example, the braid diagram $B$ in Figure \ref{fig-BD} has $w(B)=1$. 
The exponent sum $w$ does not depend on the choice of a diagram $B$ of a braid $b$, namely, $w(b)=w(B)$ is a braid invariant. 
Moreover, it is well known that $w$ is a conjugacy invariant, namely, $w(b)=w(a^{-1}ba)$ holds for any $a, b \in \mathcal{B}_n$. \\

The operation from $b \in \mathcal{B}_n$ to $f(b_1, b_2, \dots , b_n;b)$ with $b_1= \dots = b_{k-1}=b_{k+1}= \dots = b_n = id_1$ and $b_k=id_2$ is called a {\it 2-cabling for the $k^{th}$ strand of $b$}. 
For $b \in \mathcal{B}_n$ and $l \in \{ 0, 1, 2, \dots , n \}$, let $f_{i_1 i_2 \dots i_l}(2;b)$ denote the braid that is obtained from $b$ by 2-cabling the ${i_1}^{th}, {i_2}^{th}, \dots$, and ${i_l}^{th}$ strands of $b$.

\medskip 
\begin{definition}
Let $b \in \mathcal{B}_n$. 
Let $w(b)$ be the exponent sum of $b$. 
The {\it $(2,l)$ set of $b$}, denoted by $W_l(b)$, is the multiset of the values of $w$ for all the $\binom{n}{l}$ braids obtained from $b$ by 2-cabling $l$ strands of $b$. 
Namely, 
$$W_l(b)= \{ w(f_{i_1 i_2 \dots i_l}(2;b)) \ | \ 1 \leq i_1 < i_2 < \dots < i_l \leq n \} .$$
\end{definition}
\medskip

\begin{remark}
When $l=0$, we have $W_0(b)= \{ w(b) \}$, that is, the $(2,0)$ set $W_0(b)$ has the same information as the exponent sum $w(b)$. 
When $l=1$, namely, $W_1(b)=\{ w(f_1(2;b)), w(f_2(2;b)), \dots , w(f_n(2;b)) \}$, we can calculate the value of each $w(f_i(2;b))$ as $w(f_i(2;b))=w(b)+w(s_i)$, where $w(s_i)$ is the sum of the signs of the crossings on the $i^{th}$ strand $s_i$ in a diagram of $b$. 
\label{rem-l01}
\end{remark}
\medskip

\noindent For pure braids, we have the following corollary. 

\medskip 
\begin{corollary}
Let $b$, $b' \in \mathcal{P}_n$. 
If $b$ and $b'$ are conjugate, then $W_l(b)=W_l(b')$ for any $l \in \{ 0, 1, 2, \dots , n \}$. 
\label{cor-p-2l}
\end{corollary}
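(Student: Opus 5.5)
The plan is to reduce Corollary \ref{cor-p-2l} to Proposition \ref{prop-p-Fb}, or, more directly, to Proposition \ref{prop-conj-perm}, taking $g=w$ as the conjugacy invariant.

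\textbf{Step 1.} Fix $l\in\{0,1,\dots,n\}$. For a subset $I=\{i_1<\dots<i_l\}\subset N$, let $\mathbf{\beta}_I$ be the $n$-tuple with $id_2$ in the positions of $I$ and $id_1$ elsewhere. By definition, $f_{i_1\dots i_l}(2;b)=f(\mathbf{\beta}_I;b)$.

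\textbf{Step 2.} Suppose $a$ conjugates $b$ to $b'$. Proposition \ref{prop-conj-perm} gives a permutation $\rho$ on $N$, depending only on $a$ and not on the tuple, such that for every tuple
$$f(b_1,\dots,b_n;b)\stackrel{{ \mathrm{conj}}}{\backsim} f(b_{\rho(1)},\dots,b_{\rho(n)};b').$$
Applying this to $\mathbf{\beta}_I$, the resulting tuple $(b_{\rho(1)},\dots,b_{\rho(n)})$ has $id_2$ exactly in the positions $j$ with $\rho(j)\in I$, that is, in the positions of $\rho^{-1}(I)$. So it equals $\mathbf{\beta}_{\rho^{-1}(I)}$. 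Hence
$$f_I(2;b)\stackrel{{ \mathrm{conj}}}{\backsim} f_{\rho^{-1}(I)}(2;b').$$
Since both braids have $n+l$ strands, the conjugation takes place in $\mathcal{B}_{n+l}$. The exponent sum is a conjugacy invariant, so
$$w(f_I(2;b))=w(f_{\rho^{-1}(I)}(2;b')).$$

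\textbf{Step 3.} The map $I\mapsto\rho^{-1}(I)$ is a bijection on the $l$-element subsets of $N$. The equalities in Step 2 therefore match the $\binom{n}{l}$ values in $W_l(b)$ one-to-one with those in $W_l(b')$, so the two multisets coincide. The cases $l=0$ (where $W_0(b)=\{w(b)\}$ and $w$ is a conjugacy invariant) and $l=n$ are included in this argument without special treatment.

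Alternatively, one can apply Proposition \ref{prop-p-Fb} with $g=w$ and $\mathbf{\beta}=\mathbf{\beta}_{\{1,\dots,l\}}$. In the multiset $F_{\mathbf{\beta}}(w;b)$ each value $w(f_I(2;b))$ appears exactly $l!(n-l)!$ times, because that is the number of permutations $\pi$ with $\pi(\mathbf{\beta})=\mathbf{\beta}_I$. Dividing all multiplicities by $l!(n-l)!$ recovers $W_l(b)$ from $F_{\mathbf{\beta}}(w;b)$, so $W_l(b)=W_l(b')$.

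The only step needing care is the index bookkeeping in Step 2: checking that permuting the tuple by $\rho$ sends the 2-cabled set $I$ to another $l$-subset, and that this correspondence is bijective. I would check this by using the explicit form $\rho=\varphi^{-1}$ from the proof of Proposition \ref{prop-conj-perm}, where $\varphi$ is the braid permutation of $a$.
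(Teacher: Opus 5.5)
Your proposal is correct and takes essentially the paper's approach. The paper proves this corollary ``in the same way as Proposition \ref{prop-p-Fb}'': apply Proposition \ref{prop-conj-perm} to each 2-cabling, use conjugacy invariance of $w$, and note that $\rho$ induces a bijection on $l$-subsets, which is exactly your Steps 1--3; your alternative deduction from $F_{\mathbf{\beta}}(w;\cdot)$, using the uniform multiplicity $l!(n-l)!$, is also valid.
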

\medskip

\begin{proof}
This holds in the same way as Proposition \ref{prop-p-Fb}. 
\end{proof}
\medskip

\noindent The $(2,1)$ set distinguishes some non-conjugate braids as shown in the following example. 

\medskip 
\begin{example}
Let $b, b' \in \mathcal{P}_3$ be as depicted in Figure \ref{fig-3-2}. 
Then $W_1(b)= \{ w(f_1(2; b)), w(f_2(2; b)), w(f_3(2; b)) \} =\{ 2,2,-4 \}$. 
For $b'$, we have $w(f_1(2; b'))=0 \not\in W_1(b)$ and therefore $W_1(b') \neq W_1(b)$. 
This implies that $b' \not\stackrel{{ \mathrm{conj}}}{\backsim} b$ by the contrapositive of Corollary \ref{cor-p-2l}. 
Note that $b$ and $b'$ have the same exponent sum as $w(b')=w(b)=0$. 
\begin{figure}[ht]
\centering
\includegraphics[width=10cm]{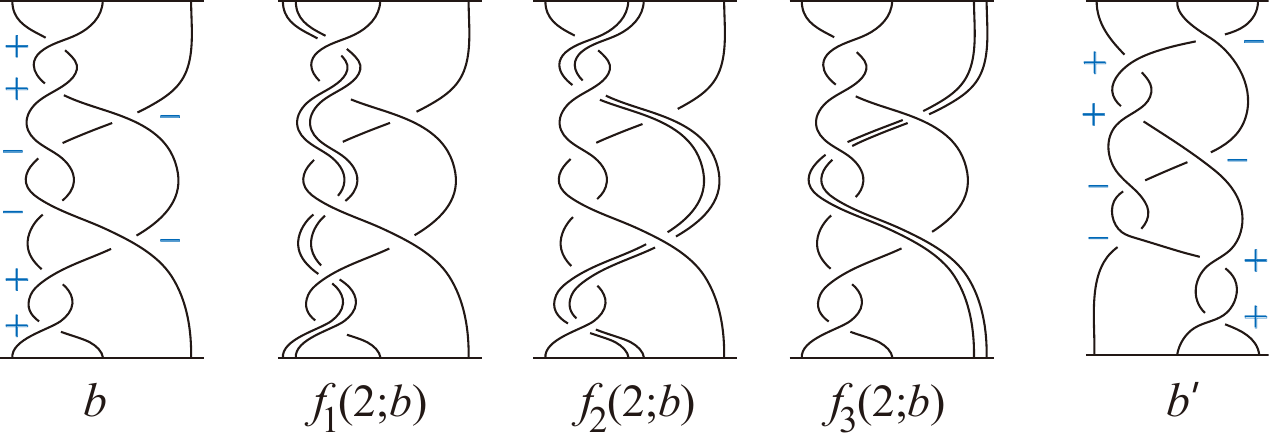}
\caption{A pair of pure 3-braids $b, b'$ with $w(b)=w(b')=0$ and $W_1(b)= \{ 2, 2, -4 \} \neq W_1(b')=\{ 0, -2, 2 \}$.  }
\label{fig-3-2}
\end{figure}
\label{ex-w1}
\end{example}
\medskip 

\noindent The $(2,2)$ set $W_2$ distinguishes some non-conjugate pure braids with the same $w$ and $W_1$ as follows. 

\medskip 
\begin{example}
The 4-braids $b, b' \in \mathcal{P}_4$ shown in Figure \ref{fig-4} have $w(b)=w(b')=0$, $W_1(b)=W_1(b')=\{ -4, 0, 2, 2 \}$. 
As for $W_2$, $b$ has $W_2(b) = \{ 2, 6, -2, 4, -6, -4 \}$. 
On the other hand, $b'$ has $w(f_{1 3}(2;b'))=-8$ and therefore $W_2(b) \neq W_2(b')$. 
Hence, $b \not\stackrel{{ \mathrm{conj}}}{\backsim} b'$. 
\begin{figure}[ht]
\centering
\includegraphics[width=12cm]{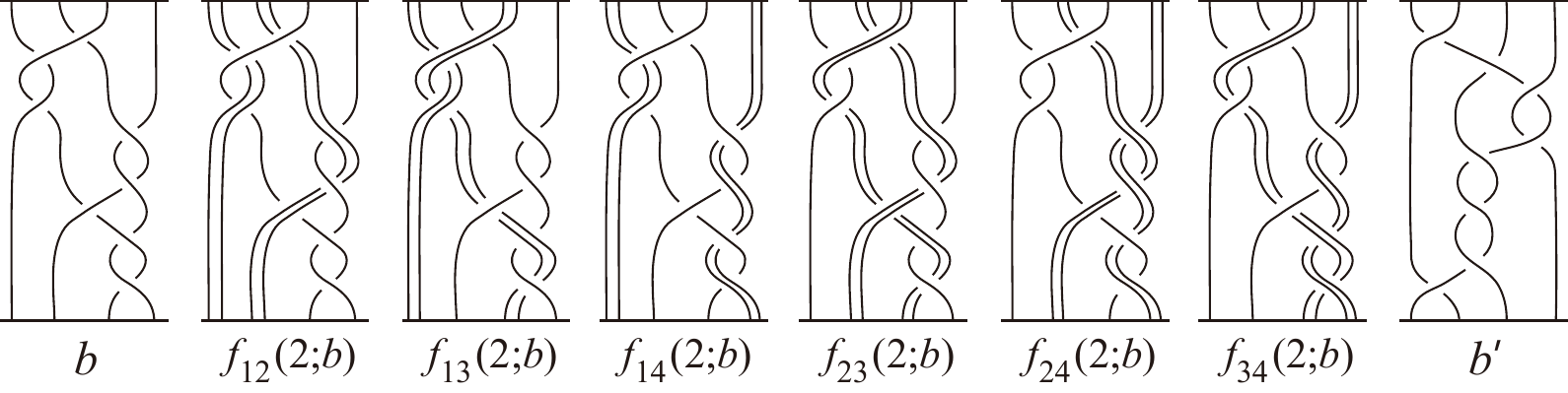}
\caption{A pair of pure 4-braids $b, b'$ with $w(b)=w(b')$, $W_1(b)=W_1(b')$, and $W_2(b)= \{ 2, 6, -2, 4, -6, -4 \} \neq W_2(b')=\{ 4, -8, 4, -2, 4, -2 \}$.  }
\label{fig-4}
\end{figure}
\label{ex-w2}
\end{example}
\medskip

\noindent Examples \ref{ex-w1} and \ref{ex-w2} show the effectiveness of cabling. 
The following proposition shows that $W_l$ determines $w$, namely, there are no pairs of braids $b, b' \in \mathcal{B}_n$ such that $w(b) \neq w(b')$ and $W_l (b)=W_l(b')$. 

\medskip 
\begin{proposition}
If $W_l(b)=W_l(b')$, then $w(b)=w(b')$ for any $l \in \{ 0, 1, 2, \dots , n \}$. 
\label{prop-W-l}
\end{proposition}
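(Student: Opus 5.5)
The plan is to express the sum of all elements of the multiset $W_l(b)$ as an affine function of $w(b)$ whose coefficients depend only on $n$ and $l$. Equal multisets have equal sums, so this gives $w(b)=w(b')$ directly.

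First I compute the writhe of a single cabling. Let $B$ be a diagram of $b$. For each strand $s_i$, let $w(s_i)$ be the sum of the signs of the crossings on $s_i$, as in Remark \ref{rem-l01}. Every crossing of $B$ involves exactly two distinct strands, so
$$\sum_{i=1}^n w(s_i)=2w(b).$$
Now 2-cable strands $i_1,\dots,i_l$. A crossing between $s_i$ and $s_j$ turns into a grid of crossings, all with the same sign, and the number of them is $c_ic_j$. Here $c_i=2$ if $i\in I=\{i_1,\dots,i_l\}$ and $c_i=1$ otherwise. The doubled trivial braid $id_2$ inside each tube adds no crossings. Write $w_{ij}$ for the signed count of crossings between $s_i$ and $s_j$, so $w_{ij}=m_{ij}+m_{ji}$ in terms of $C(b)$. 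Then
$$w(f_{I}(2;b))=\sum_{\{i,j\}} c_ic_j\,w_{ij}.$$
Since $c_ic_j=1+\mathbf 1_{i\in I}+\mathbf 1_{j\in I}+\mathbf 1_{i\in I}\mathbf 1_{j\in I}$, this equals
$$w(b)+\sum_{i\in I}w(s_i)+\sum_{\{i,j\}\subset I}w_{ij}.$$
This agrees with the case $l=1$ in Remark \ref{rem-l01}.

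Next I sum over all $\binom nl$ subsets $I$ of size $l$:
\begin{itemize}
\item The first term contributes $\binom nl w(b)$.
\item Each $i$ lies in $\binom{n-1}{l-1}$ of the subsets, so the second term contributes $\binom{n-1}{l-1}\cdot 2w(b)$.
\item Each pair $\{i,j\}$ lies in $\binom{n-2}{l-2}$ of the subsets, and $\sum_{\{i,j\}}w_{ij}=w(b)$, so the third term contributes $\binom{n-2}{l-2}w(b)$.
\end{itemize}
Hence
$$\sum W_l(b)=\Bigl(\tbinom nl+2\tbinom{n-1}{l-1}+\tbinom{n-2}{l-2}\Bigr)w(b).$$
The coefficient is positive because $\binom nl\ge 1$ for $0\le l\le n$. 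So $W_l(b)=W_l(b')$ gives equal sums and therefore $w(b)=w(b')$. If $b$ and $b'$ have different numbers of strands, the multisets already have sizes $\binom nl$ and $\binom{n'}l$; I will assume $b,b'\in\mathcal B_n$, as in the statement preceding the proposition.

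The only point I expect to need care is justifying the writhe formula for the cabled braid. I would argue it diagrammatically: take the tubes thin and the parallel copies flat, so each crossing of $B$ between $s_i$ and $s_j$ becomes exactly $c_ic_j$ crossings, each with the same sign. Everything after that is binomial counting.
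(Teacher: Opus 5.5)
Your proof is correct and follows the paper's argument: the sum of the entries of $W_l(b)$ is a positive constant multiple of $w(b)$, so equal multisets force $w(b)=w(b')$. Your single coefficient $\binom{n}{l}+2\binom{n-1}{l-1}+\binom{n-2}{l-2}$ equals the paper's per-crossing count $\binom{n-2}{l}+4\binom{n-2}{l-1}+4\binom{n-2}{l-2}$ by Pascal's rule, and it gives the paper's separate values $1$, $n+2$, $4n-4$, $4$ for $l=0,1,n-1,n$, so your version simply removes the paper's case split.
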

\medskip 

\begin{proof}
\begin{itemize}
\item Suppose that $l=0$. 
Then $W_0(b)= \{  w(b) \}$. 
Hence, if $W_0(b)=W_0(b')$, then $w(b)=w(b')$. 
\item Suppose that $l=1$. 
From Remark \ref{rem-l01}, we have the sum of all the entries $w(f_i(2;b))$ in $W_1(b)$ as $\sum_{i=1}^n w(f_i(2;b))=\sum_{i=1}^n(w(b)+w(s_i))=n \times w(b)+\sum_{i=1}^n w(s_i)=(n+2)w(b)$, that is, the sum is a non-zero constant multiplication of $w(b)$.  
If $W_1(b)=W_1(b')$ for $b, b' \in \mathcal{B}_n$, then the sums of entries are equal, namely, $(n+2)w(b)=(n+2)w(b')$ and therefore $w(b)=w(b')$. 
\item Suppose that $2 \leq l \leq n-2$. 
Let $p$ be a crossing between $s_i$ and $s_j$ ($i \neq j$) in a diagram of $b$. 
In $W_l(b)$, there are $\binom{n-2}{l}$ cases that none of $s_i$ or $s_j$ is 2-cabled. 
There are $\binom{n-2}{l-1}$ cases that exactly one of $s_i$ and $s_j$ is 2-cabled. 
There are $\binom{n-2}{l-2}$ cases that both of $s_i$ and $s_j$ are 2-cabled. 
Let $\varepsilon (p)$ be the sign of $p$. 
The total sum of all the entries in $W_l(b)$ is $\sum_p \varepsilon (p) \left\{ \binom{n-2}{l}+2 \binom{n-2}{l-1} \times 2 + 4 \binom{n-2}{l-2} \right\}=w(b) \left\{ \binom{n-2}{l}+4 \binom{n-2}{l-1} + 4 \binom{n-2}{l-2} \right\}$, that is, the total sum is a non-zero constant multiplication of $w(b)$. 
Hence, if $W_l(b)=W_l(b')$, then $w(b)=w(b')$. 
\item Suppose that $l=n-1$. 
The sum of all the entries in $W_{n-1}(b)$ is $\sum_{i=1}^n (4w(b)-2w(s_i))=(4n-4)w(b)$. 
\item Suppose that $l=n$. 
Then $W_n(b)= \{ 4 w(b) \}$. 
\end{itemize}
\end{proof}
\medskip

\noindent The $(2,l)$ set is not effective for some braids as follows. 

\medskip 
\begin{example}
Let $b=(\sigma_1 \sigma_2^{-1} \sigma_3 \dots \sigma_{n-1}^{-1})^n \in \mathcal{P}_n$ be a weaving braid with an odd number $n$. 
We have $w(f_{i_1i_2 \dots i_l}(2;b))=0$ for every $l \in \{ 0, 1, 2, \dots , n \}$ and every choice of $1 \leq i_1 < i_2 < \dots < i_l \leq n$ since each pair of strands has two crossings with opposite signs (See Proposition 2.1 in \cite{ASA}). 
Hence, $W_l(b)$ is a multiset of $\binom{n}{l}$ zeros, which is the same as $W_l(id_n)$. 
Since $b \neq id_n$, and the identity braid is conjugate to only itself, $b$ and $id_n$ are not conjugate. 
In this case, the $(2,l)$ sets cannot distinguish $b$ from $id$ up to conjugacy. 
\label{ex-weaving}
\end{example}
\medskip

\begin{definition}
For a braid $b \in \mathcal{B}_n$, let $r$ be the order of the braid permutation of $b$. 
The {\it purified $(2,l)$ set of $b$}, denoted by $W^{\text{pur}}_l(b)$, is defined as $W^{\text{pur}}_l(b)=W_l(b^r)$. 
\end{definition}
\medskip 

\noindent We have the following corollary. 

\medskip 
\begin{corollary}
Let $b$, $b' \in \mathcal{B}_n$. 
If $b$ and $b'$ are conjugate, then $W^{\text{pur}}_l(b)=W^{\text{pur}}_l(b')$ for each $l \in \{ 0, 1, 2, \dots , n \}$. 
\label{cor-b-2l}
\end{corollary}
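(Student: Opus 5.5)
The plan is to reduce the statement to Corollary \ref{cor-p-2l}, in the same way that Corollary \ref{cor-b-Fb} was obtained from Proposition \ref{prop-p-Fb}. Let $b' = a^{-1}ba$ for some $a \in \mathcal{B}_n$, and let $\pi$ and $\pi'$ be the braid permutations of $b$ and $b'$.

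\textbf{Step 1: equal orders.} The map sending a braid to its braid permutation is a homomorphism to the symmetric group. Hence $\pi' = \varphi^{-1}\pi\varphi$, where $\varphi$ is the braid permutation of $a$. Conjugate permutations have the same order, so $r = r'$.

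\textbf{Step 2: conjugate pure powers.} Raising the conjugation relation to the $r$-th power gives $(b')^r = a^{-1}b^r a$. So $b^r$ and $(b')^r$ are conjugate. Both lie in $\mathcal{P}_n$, because their braid permutations are $\pi^r = \mathrm{id}$ and $(\pi')^{r} = \mathrm{id}$.

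\textbf{Step 3: apply the pure case.} Corollary \ref{cor-p-2l} now gives $W_l(b^r) = W_l((b')^{r})$ for every $l \in \{0, 1, \dots, n\}$. By the definition of the purified $(2,l)$ set, this is exactly $W^{\text{pur}}_l(b) = W^{\text{pur}}_l(b')$.

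There is no real obstacle here. The only point needing care is Step 1: $W^{\text{pur}}_l$ is defined using the order of the braid permutation, so we must confirm that both braids are raised to the same exponent. The homomorphism argument in Step 1 settles this.
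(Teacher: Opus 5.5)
Your proof is correct and follows the paper's approach: the paper proves this statement ``in the same way as Corollary~\ref{cor-b-Fb}'', i.e., it notes that $r=r'$ and $b^r \stackrel{\mathrm{conj}}{\backsim} (b')^{r}$ in $\mathcal{P}_n$, and then applies the pure-braid result, here Corollary~\ref{cor-p-2l}. Your Step 1 additionally spells out why $r=r'$, which the paper leaves implicit. Under the paper's stacking convention the permutation map may be an anti-homomorphism rather than a homomorphism, but in either case $\pi'$ is conjugate to $\pi$, so the orders agree.
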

\medskip

\begin{proof}
This holds in the same way as Corollary \ref{cor-b-Fb}. 
\end{proof}
\medskip 

\noindent For the purified version, we have the following example that shows the effectiveness of 2-cabling. 

\medskip
\begin{example}
Let $b=\sigma_1 \sigma_2 \sigma_3^{-1}$, $b'= \sigma_1 \sigma_2^{-1} \sigma_3 \in \mathcal{B}_4$. 
Note that $b$ and $b'$ have the same exponent sum as 1 and the same order of braid permutation as 4. 
Moreover, the closures of $b$ and $b'$ are both the trivial knot. 
We have $W^{\text{pur}}_0(b)=W_0(b^4)= \{ 4 \} = W^{\text{pur}}_0(b')$, $W^{\text{pur}}_1(b)=W_1(b^4)= \{ 6,6,6,6 \} =W^{\text{pur}}_1(b')$, and $W^{\text{pur}}_2(b)= \{ 8,8,8,8,10,10 \}$, $W^{\text{pur}}_2(b')= \{ 6,6,10,10,10,10 \}$. 
From $W^{\text{pur}}_2(b) \neq W^{\text{pur}}_2(b')$, we can conclude that $b$ and $b'$ are not conjugate by the contrapositive of Corollary \ref{cor-b-2l}.
\label{ex-np-W}
\end{example}

\subsection{Calculation of $(2,l)$ sets from crossing matrices} 

In this subsection, we observe that the purified $(2,l)$ sets can also be obtained from the crossing matrix and the braid permutation of $b$. \\

\noindent The braid invariants $w(b)$ and $W_1(b)$ of $b$ are obtained directly from $C(b)$ as follows. 

\medskip 
\begin{example}
The exponent sum $w(b)$ of an $n$-braid $b$ is derived from its crossing matrix $C(b)=[m_{i j}]$ as $w(b)= \sum_{i,j} m_{i j}=\mathbf{1}_{1 \times n} \ C(b) \ \mathbf{1}_{n \times 1}$, where $\mathbf{1}_{1 \times n}$ is the row vector $[ 1, 1, \dots , 1 ]$ and $\mathbf{1}_{n \times 1} = \mathbf{1}_{1 \times n}^T$. 
\label{ex-w-W1}
\end{example}
\medskip 

\begin{example}
Let $C(b)=[m_{i j}]$ be the crossing matrix of $b$. 
Since $w(f_i(2;b))=w(b)+w(s_i)$ and $w(s_i)= \sum_{k \neq i}m_{i k}+\sum_{k \neq i}m_{k i}$, we can calculate $W_1$ from $C(b)$. 
\end{example}
\medskip 

\noindent Thus, $w(b)$ and $W_1(b)$ are calculable from the crossing matrix $C(b)$. 
We can calculate $W_l(b)$ from $C(b)$ as follows.

\medskip 
\begin{example}
By Proposition \ref{prop-CM-2}, the crossing matrices of 2-cabled braids are obtained automatically from the crossing matrix and the braid permutation of the original braid.
Then the $(2,l)$ set $W_l$ is obtained by the calculation given in Example \ref{ex-w-W1}. 
Let $b$ be the pure 4-braid given in Example \ref{ex-w2}. 
Then we obtain $W_2(b)= \{ 2, 6, -2, 4, -6, -4 \}$ by the matrices
\begin{align*}
\begin{bmatrix}
0 & 0 & 0 & 0 & 1 & 0 \\
0 & 0 & 0 & 0 & 1 & 0 \\
0 & 0 & 0 & 0 & 1 & -1 \\
0 & 0 & 0 & 0 & 1 & -1 \\
1 & 1 & 1 & 1 & 0 & -1 \\
0 & 0 & -1 & -1 & -1 & 0 
\end{bmatrix}, \ 
\begin{bmatrix}
0 & 0 & 0 & 1 & 1 & 0 \\
0 & 0 & 0 & 1 & 1 & 0 \\
0 & 0 & 0 & 1 & 1 & -1 \\
1 & 1 & 1 & 0 & 0 & -1 \\
1 & 1 & 1 & 0 & 0 & -1 \\
0 & 0 & -1 & -1 & -1 & 0 
\end{bmatrix}, \ 
\begin{bmatrix}
0 & 0 & 0 & 1 & 0 & 0 \\
0 & 0 & 0 & 1 & 0 & 0 \\
0 & 0 & 0 & 1 & -1 & -1 \\
1 & 1 & 1 & 0 & -1 & -1 \\
0 & 0 & -1 & -1 & 0 & 0 \\
0 & 0 & -1 & -1 & 0 & 0 
\end{bmatrix}, \\
\begin{bmatrix}
0 & 0 & 0 & 1 & 1 & 0 \\
0 & 0 & 0 & 1 & 1 & -1 \\
0 & 0 & 0 & 1 & 1 & -1 \\
1 & 1 & 1 & 0 & 0 & -1 \\
1 & 1 & 1 & 0 & 0 & -1 \\
0 & -1 & -1 & -1 & -1 & 0 
\end{bmatrix}, \ 
\begin{bmatrix}
0 & 0 & 0 & 1 & 0 & 0 \\
0 & 0 & 0 & 1 & -1 & -1 \\
0 & 0 & 0 & 1 & -1 & -1 \\
1 & 1 & 1 & 0 & -1 & -1 \\
0 & -1 & -1 & -1 & 0 & 0 \\
0 & -1 & -1 & -1 & 0 & 0 
\end{bmatrix}, \ 
\begin{bmatrix}
0 & 0 & 1 & 1 & 0 & 0 \\
0 & 0 & 1 & 1 & -1 & -1 \\
1 & 1 & 0 & 0 & -1 & -1 \\
1 & 1 & 0 & 0 & -1 & -1 \\
0 & -1 & -1 & -1 & 0 & 0 \\
0 & -1 & -1 & -1 & 0 & 0 
\end{bmatrix}
\end{align*}
that are obtained from 
\begin{align*}
C(b)=
\begin{bmatrix}
0 & 0 & 1 & 0 \\
0 & 0 & 1 & -1 \\
1 & 1 & 0 & -1 \\
0 & -1 & -1 & 0 
\end{bmatrix}.
\end{align*}
\end{example}
\medskip

\noindent By Proposition \ref{prop-C-br}, the crossing matrices of powers of a braid are obtained automatically from the crossing matrix and the braid permutation of the original braid. 
The multiset $W^{\text{pur}}_l(b)$ is also calculable from $C(b)$ and the braid permutation of $b$. 

\medskip 
\begin{remark}
Corollaries \ref{cor-p-2l} and \ref{cor-b-2l} can also be proved by considering this calculation from the crossing matrix by Lemma \ref{lem-CM-p-perm} and Proposition \ref{prop-CM-pure-b}. 
\end{remark}

\section*{Acknowledgment}
This work was partially supported by the JSPS KAKENHI Grant Number JP21K03263.

\end{document}